\spnewtheorem{hypothesis}{Hypothesis}[section]{\bf}{\it}
\spnewtheorem*{proofofresult}{Proof}{\it}{\rm}
\journalname{Journal of Evolution Equations}
\begin{document}

\title{The essential spectrum of  periodically-stationary solutions of the complex 
Ginzburg-Landau equation
\thanks{J.Z. was supported by the NSF under grant DMS 1620293 and thanks the Department of Mathematics at  UNC Chapel Hill for hosting his Fall 2018 sabbatical, during which this work began.}
\thanks{Y.L. was supported by the NSF under grant DMS 1710989 and thanks the Courant Institute
of Mathematical Sciences and, especially, Prof. Lai-Sang Young, for the opportunity to visit
the Institute where this work was conducted.}
\thanks{J.L.M. was supported in part by NSF CAREER Grant DMS 1352353 and NSF Grant DMS 1909035.}
\thanks{C.K.R.T.J. was supported by the US Office of Naval Research under grant  N00014-18-1-2204.}}

\titlerunning{Essential spectrum of periodically-stationary Ginzburg-Landau pulses}        

\author{John Zweck \and
            Yuri Latushkin \and 
             Jeremy L. Marzuola \and 
             Christopher K.R.T. Jones
             }

\authorrunning{Jones, Latushkin, Marzuola, and Zweck} 

\institute{               John Zweck \at
                Department of Mathematical Sciences, The University of Texas at Dallas, Richardson, TX 75080, USA\\
                \email{zweck@utdallas.edu}
            \and
            Yuri Latushkin \at
             Department of Mathematics, The University of Missouri, 
             Columbia, MO 65211, USA\\
             \email{latushkiny@missouri.edu}
             \and
             Jeremy L. Marzuola \at
             Department of Mathematics, The University of North Carolina at Chapel Hill, 
                NC 27599, USA\\
                \email{marzuola@email.unc.edu}
                \and
                Christopher K.R.T. Jones \at
               Department of Mathematics, The University of North Carolina at Chapel Hill, 
                NC 27599, USA\\
              \email{ckrtj@email.unc.edu}
 }

\date{Received: date / Accepted: date}

\dedication{To Matthias Hieber with best wishes}

\maketitle

\begin{abstract}
We establish the existence and regularity properties of a monodromy operator
for the linearization of the cubic-quintic complex Ginzburg-Landau equation
about a periodically-stationary (breather) solution. We derive
a formula for the essential spectrum of the
monodromy operator in terms of that of the associated 
asymptotic linear differential operator.
This result is obtained using the theory of 
analytic semigroups under the assumption that  the  Ginzburg-Landau equation includes a 
spectral filtering (diffusion) term. We discuss applications to the stability of 
periodically-stationary pulses in ultrafast fiber lasers.
\keywords{Nonlinear waves \and Breather solutions  \and Essential spectrum \and Analytic semigroups \and Fiber lasers}
%
\subclass{35B10 \and 35Q56 \and 37L15 \and 47D06 \and 78A60}
\end{abstract}

\section{Introduction}

The cubic-quintic complex Ginzburg-Landau equation (CQ-CGLE)
is a fundamental model for nonlinear waves and coherent structures
that arise in fields such as  nonlinear optics and condensed matter physics~\cite{akhmediev2008dissipative,RMP74p99}. 
The CQ-CGLE supports a wide variety of  solutions, including 
stationary pulses,  periodically-stationary pulses,
fronts, exploding solitons, and chaotic solutions~\cite{akhmediev2008dissipative}. 
While stationary pulses  (solitons) maintain their shape,
periodically-stationary pulses (breathers) change shape as they propagate,
returning to the same shape periodically.

Periodically-stationary pulses are the solutions of primary interest
to engineers designing ultrafast fiber lasers. These lasers 
generate pulses with widths in the picosecond to femtosecond range
and have applications to time and frequency metrology~\cite{chong2015ultrafast,kartner2004few,SIREV48p629}.
Since the advent of the soliton laser~\cite{Duling:91,Mollenauer:84}, researchers have invented several generations of short-pulse, high-energy fiber lasers for
a variety of applications.
In the mid 1990's stretched-pulse (dispersion-managed) lasers
were devised to generate pulses with higher energy and
shorter duration than can be acheived with soliton lasers~\cite{tamura199377,tamura1994soliton}.
Dissipative soliton lasers, in which effects
such as spectral filtering play a significant role, were
introduced in about 2005 and are suitable for high energy applications~\cite{chong2006all,grelu2012dissipative}.
Similariton lasers were introduced in 2010 to create
femto-second pulses with a high tolerance to noise~\cite{chong2015ultrafast,fermann2000self,hartl2007cavity} by
exploiting the theoretical discovery of exponentially growing, self-similar pulses
in optical fiber amplifiers.
The most recent invention is the  Mamyshev oscillator which can  produce
pulses with a peak power in the Megawatt range~\cite{regelskis2015ytterbium,sidorenko2018self}.


The key issue for mathematical modeling of ultrafast lasers is
to determine those regions in the design parameter space
in which stable pulses exist, and within that space to optimize the
pulse parameters.  A significant challenge  
is that from one generation of lasers to the next there has been
an  increase in the amount by which the pulse changes within each 
round trip  (period).
Consequently, soliton perturbation theory~\cite{OptLett11p665,PLA42p5689}, 
which was developed to analyze the stability of stationary pulses, is no longer applicable.
Although low-dimensional reduced ODE models~\cite{tsoy2005bifurcations,tsoy2006dynamical}
and Monte Carlo simulations that use full PDE models~\cite{zweck2018computation}
have both been employed to assess pulse stability, 
to date there is no mathematical theory
to determine  the stability of periodically-stationary laser pulses. 
In ultrafast lasers, the different physical effects (dispersion, nonlinearity, spectral filtering,
saturable gain and loss) occur in  different devices within the laser. For a 
quantitative model it is therefore necessary to use am equation
such as the CQ-CGLE in which the coefficients are piecewise constant 
in the evolution variable. However, constant coefficient models are also often used to
gain qualitative insight into the system behavior~\cite{renninger2008dissipative}.

In this paper, we take a first step in the development of  a stability  theory
for ultrafast lasers
by calculating the essential spectrum of the monodromy operator
of the linearization of the CQ-CGLE about a periodically-stationary pulse solution. Because variable coefficient equations  are more challenging to analyze,
we restrict attention to the constant coefficient CQ-CGLE,
which is a phenomenological, distributed model for short-pulse fiber
lasers~\cite{grelu2012dissipative,SIREV48p629}.
This equation has two important classes of solutions that are periodic
in the temporal variable. 

The first class is the family of  Kuznetsov-Ma (KM)
breathers~\cite{kuznetsov1977solitons,ma1979perturbed},
which are analytical solutions of the focusing nonlinear Schr\"odinger  equation (FNLSE),
a special case of the CQ-CGLE. These solutions, which were discovered using 
integrable systems techniques, have a non-zero background at spatial infinity.
A numerical Floquet spectrum computation 
by Cuevas-Maraver \emph{et al.}~\cite{cuevas2017floquet}
suggests that the KM breather is linearly unstable.
Mu\~noz~\cite{munoz2017instability} recently used a Lyapunov functional
to prove that because of their non-zero background, these breathers 
are unstable under small $H^s$ ($s>\frac 12$) perturbations.
Integrable systems techniques 
have also been used to find breather solutions of the modified and
higher-order KdV equations and the Gardner hierarchy~\cite{clarke2000chaos,alejo2019dynamics}.
In a recent series of papers~\cite{alejo2018nonlinear,alejo2013nonlinear,alejo2017variational},
Alejo exploited the integrability structure
of these PDE's and Lyapunov functional techniques to establish the nonlinear
stability of several  such breathers.

The second class consists of the 
periodically-stationary pulses discovered numerically by Akhmediev and his
collaborators~\cite{akhmediev2001pulsating,tsoy2005bifurcations,tsoy2006dynamical}.
These solutions were found in the case that the CQ-CGLE includes a spectral filtering term. 
Although Akhmediev \emph{et al.} provided strong numerical evidence for the existence of these solutions, there are no known analytic formulae for these solutions and  no mathematical proof of their existence.
However, using numerical simulations and reduced ODE models,
they provided numerical evidence for the existence of both stable and unstable  
periodically-stationary pulses.

In Floquet theory, the stability of periodic solutions of a system of ODE's is
characterized by the spectrum of the monodromy matrix of the
linearization of the system about the  solution.
Although Floquet methods have been developed for solutions of PDE's that are
periodic in the spatial variables (see for 
example~\cite{gesztesy1995floquet,jones2010stability,kuchment2012floquet,reed1980methods}), we are only aware of a few results for solutions that  are periodic in the temporal (evolution) variable. 
Wilkening~\cite{wilkeningharmonic} developed a numerical method  to study the stability of standing water waves and other time-periodic solutions of the free-surface Euler equations.
Motivated by problems from quantum mechanics,
Korotyaev~\cite{korotyaev1983spectrum,korotyaev1985eigenfunctions}  studied  Schr\"odinger operators that have
a real scalar potential which is periodic in time and rapidly decaying in space.
He showed that the  monodromy operator has no  
singular continuous spectrum. However, this result relies heavily on fact that the  evolution operator is unitary, which is not
the case when the CQ-CGLE includes a spectral filtering term.
Similar results are discussed in the book of Kuchment~\cite{kuchment2012floquet}.
Finally, Sandstede and Scheel~\cite{doelman2009dynamics,sandstede2001structure,sandstede2002stability}
have an extensive body of theoretical and numerical results on the stability of  
time-periodic perturbations of spatially-periodic traveling waves.
However, because of the underlying spatial periodicity in their formulation, these results are not applicable to laser systems.

The results in this paper can be summarized as follows. 
In Section~\ref{Sec:Exs}, we review the periodically-stationary solutions
of Kuznetsov-Ma and Akhmediev and define the time-periodic operator,
$\mathcal L(t)$, which is obtained by linearization of the CQ-CGLE about
a periodically-stationary solution. We also discuss the asymptotic operator,
$\mathcal L_\infty$, associated with $\mathcal L(t)$. 
In Section~\ref{Sec:EssSpecL}, we calculate the 
essential spectrum of $\mathcal L_\infty$ with the aid of some results
from the text of Kapitula and Promislow~\cite{kapitula2013spectral}, and 
we show that $\mathcal L(t)$ is a relatively compact perturbation of  $\mathcal L_\infty$.
In Section~\ref{Sec:EvolFamily}, 
we establish the existence of an evolution family  for $\mathcal L(t)$ by applying classical results on solutions of initial-value problems for non-autonomous linear differential equations in Banach spaces~\cite{pazy2012semigroups}.
In Section~\ref{Sec:EssSpecM}, we use the results obtained in Section~\ref{Sec:EvolFamily} to define the monodromy operator, $\mathcal M(s)$,
and establish the main result of the paper, which is a formula for the
essential spectrum of $\mathcal M(s)$ in terms of the essential spectrum
of the asymptotic operator, $\mathcal L_\infty$. 
To obtain this result, we assumed that the CQ-CGLE includes a spectral filtering term,
which ensures that the semigroup of the asymptotic operator is analytic. 
Since all fiber lasers have bandlimited gain, this assumption 
holds in applications.
Based on the numerical results of Cuevas-Maraver \emph{et al.}~\cite{cuevas2017floquet}, we conjecture that the formula for the essential spectrum of 
$\mathcal M(s)$ also holds for the KM breather. However a different approach 
will be required to prove such a result, since 
in this case the asymptotic operator is not analytic. 

\section{Motivating examples}\label{Sec:Exs}

We consider  a class of one-dimensional, constant-coefficient
nonlinear Schr\"odinger equations of the form
\begin{equation}
i\partial_t \psi \,\,+\,\, \tfrac 12 \partial_x^2 \psi \,\,+\,\, f( | \psi |^2)\psi=0,
\label{eq:nls}
\end{equation}
where $f$ is a polynomial with complex coefficients. 
We call $t$ the \emph{temporal} or \emph{evolution} variable and  $x$ 
the \emph{spatial} variable. 
We consider solutions, $\psi$, for which 
$f( | \psi |^2)\psi \to 0$ at an exponential rate as $x\to \pm\infty$.
We say that $\psi$ is a \emph{periodically-stationary solution} of \eqref{eq:nls}
if there is a period, $T$, so that $\psi(t+T,x) = \psi(t,x)$ for all $t$ and $x$. 
Our analysis is motivated by two important examples.

\begin{example}\label{ex:KM}
Kuznetsov~\cite{kuznetsov1977solitons} and Ma~\cite{ma1979perturbed}
independently discovered a  family of periodically-stationary solutions
of the FNLSE,
\begin{equation}
i\partial_t \psi \,\,+\,\, \tfrac 12 \partial_x^2 \psi \,\,+\,\, ( | \psi |^2-\nu_0^2)\psi=0,
\label{eq:KMnls}
\end{equation}
with the property that
\begin{equation}
\lim\limits_{x\to\pm\infty} \psi(t,x) = \nu_0.
\end{equation}
Here the parameter, $\nu_0>0$, is the background amplitude.
The Kuznetsov-Ma (KM) breathers are defined in terms of a 
second parameter, $\nu > \nu_0$,  by~\cite{garnier2012inverse}
\begin{equation}
\psi_{\rm{KM}}(t,x)\,\,=\,\, \nu_0 \,\,+\,\, 2\eta\,
\frac{\eta\cos(2\nu\eta t) + i\nu \sin(2\nu\eta t) }{\nu_0 \cos(2\nu\eta t) - \nu \cosh(2\eta x)},
\label{eq:KM}
\end{equation}
where $\eta = \sqrt{\nu^2-\nu_0^2} > 0$. The period of $\psi_{\rm{KM}}$ is 
$T= {\pi}/{\nu\eta}$. 
We observe that $f(|\psi_{\rm{KM}}|^2)\psi_{\rm{KM}} =  (|\psi_{\rm{KM}}|^2 -\nu_0^2)\psi_{\rm{KM}}\to 0$ at an exponential rate as $x\to\pm\infty$.
\end{example}

\begin{example}\label{ex:GL}
The CQ-CGLE~\cite{akhmediev2008dissipative,RMP74p99}  is given by
\begin{equation}
i\partial_t\psi + (\tfrac D2 - i \beta)\partial_x^2 \psi -  i\delta\psi +  
\left[\gamma-i\epsilon + (\nu-i\mu)  |\psi|^2\right]|\psi|^2\psi
\,\,=\,\, 0.
\label{eq:GL}
\end{equation}
Among other applications, the CQ-CGLE  provides a qualitative model
for the generation of short-pulses in mode-locked fiber lasers~\cite{akhmediev2008dissipative,SIREV48p629}. 
In this context, the parameters in the equation can  be interpreted as follows.
The coefficient, $D$, is the fiber dispersion, which is positive in the 
anomalous or focusing dispersion regime and negative 
in the normal or defocusing dispersion regime.
Spectral filtering  is modeling using the term with  coefficient $\beta > 0$.
The remaining terms model  linear gain or loss ($\delta$), 
saturable nonlinear gain ($\epsilon>0$ and $\mu<0$),
and the cubic and quintic nonlinear electric susceptibility of the optical fiber
($\gamma>0$ and $\nu>0$). 

Using a numerical partial differential equation solver, 
Akhmediev and his collaborators~\cite{akhmediev2008dissipative,akhmediev2001pulsating,tsoy2005bifurcations,tsoy2006dynamical} provide strong evidence for the existence of 
periodically-stationary solutions of \eqref{eq:GL}, which they refer to as 
pulsating solitons. 
There are no known analytical formulae for these solutions.
However, the numerical results show that these solutions decay at an exponential rate as $x\to\pm\infty$.
\end{example}

Equations~\eqref{eq:KMnls} and \eqref{eq:GL} are both special cases of the
general nonlinear wave equation
\begin{equation}
i\partial_t\psi + (\tfrac D2 - i \beta)\partial_x^2 \psi + (\alpha -  i\delta)\psi +  
\left[\gamma-i\epsilon + (\nu-i\mu)  |\psi|^2\right]|\psi|^2\psi
\,\,=\,\, 0.
\label{eq:NWE}
\end{equation}
Throughout this paper, we assume that $(D,\beta) \neq (0,0)$. 
Since the linearization of \eqref{eq:NWE} about a solution involves both the linearized unknown and its complex conjugate, we reformulate \eqref{eq:NWE} 
as the system of equations for 
$\boldsymbol\psi = \begin{bmatrix} \Re(\psi) & \Im(\psi)\end{bmatrix}^T$
  given by
\begin{equation}
\partial_t  \boldsymbol\psi  \,\,=\,\,
\left(\mathbf{B} \, \partial^2_x +  \mathbf{N}_0  +
\mathbf{N}_1 |  \boldsymbol\psi |^2  +
\mathbf{N}_2 |  \boldsymbol\psi |^4  \right)
 \boldsymbol\psi,
\label{eq:NWEsys}
\end{equation}
where 
\begin{equation}
 \mathbf{B} = \begin{bmatrix} \beta & -\frac D2 \\ \frac D2 & \beta\end{bmatrix},
\end{equation}
and 
\begin{equation}
\mathbf{N}_0 =  \begin{bmatrix} 
\delta & -\alpha \\ \alpha & \delta
\end{bmatrix}, \qquad
\mathbf{N}_1 =  \begin{bmatrix} 
\epsilon & -\gamma \\ \gamma & \epsilon
\end{bmatrix}, \qquad
\mathbf{N}_2 =  \begin{bmatrix} 
\mu & -\nu \\ \nu & \mu
\end{bmatrix}.
\end{equation}
The linearization of \eqref{eq:NWEsys} about a solution, $\boldsymbol\psi$,
is of the form 
\begin{equation}
\partial_t \mathbf p \,\,=\,\,\mathcal L(t) \mathbf p,
\label{eq:Linearization}
\end{equation}
where $\mathcal L=\mathcal L(t)$ is a second-order, linear differential operator in $x$ 
with real, $t$- and $x$-dependent, matrix-valued
coefficients. 
 If the solution $\boldsymbol\psi$ is periodically stationary
with period $T$, then $\mathcal L$ is periodic in $t$ with 
$\mathcal L(t+T) = \mathcal L(t)$.
Substituting $\boldsymbol\psi_\varepsilon = \boldsymbol\psi+ \varepsilon \mathbf p$
into \eqref{eq:NWEsys} are keeping only terms of order $ \varepsilon$ we find that
\begin{equation}
\mathcal L(t) \,\,=\,\, 
\mathbf{B} \, \partial^2_x +  \widetilde{ \mathbf{M}}(t),
\label{eq:OpL}
\end{equation}
where $\widetilde{\mathbf{M}}(t)$ is the operator of multiplication by
\begin{equation}
\widetilde{ \mathbf{M}}(t,x) = \mathbf{N}_0  +
\mathbf{N}_1 |  \boldsymbol\psi |^2  +
\mathbf{N}_2 |  \boldsymbol\psi |^4 +
\left(2\mathbf{N}_1 + 4\mathbf{N}_2 |\boldsymbol\psi |^2\right)  \boldsymbol\psi \boldsymbol\psi^T.
\label{eq:OpM}
\end{equation}

\section{Essential Spectrum of the Linearized Differential Operator}\label{Sec:EssSpecL}

In this section we introduce the asymptotic operator, $\mathcal L_\infty$, associated with 
the differential operator, $\mathcal L(t)$, 
and determine the  essential spectrum of 
$\mathcal L_\infty$. We also prove that  $\mathcal L(t)$ is a relatively compact perturbation
of $\mathcal L_\infty$. 
Our results rely on a  general theory summarized by 
 Kapitula and Promislow~\cite{kapitula2013spectral} and on a classical compactness theorem for $L^2$ due to Kolmogorov and Riesz~\cite{hanche2010kolmogorov,reed1980methods}.
We begin with the following assumption.

\begin{hypothesis}\label{hyp:EA}
The solution, $\boldsymbol\psi$, about which \eqref{eq:NWEsys} is linearized is assumed to be periodically stationary with period, $T$, and has the following properties:
\begin{enumerate}
\item  For each $t\in[0,T]$, the function $\boldsymbol\psi(t,\cdot) \in L^\infty(\mathbb R,\mathbb C^2)$;
\item For each $t\in[0,T]$, the weak derivative $\boldsymbol\psi_x(t,\cdot) \in L^\infty(\mathbb R,\mathbb C^2)$;
\item There exist constants $r>0$ and $\boldsymbol\psi_\infty \in \mathbb C^2$ 
so that 
\begin{equation*}
\lim\limits_{|x|\to\infty} e^{r|x|} 
\|\boldsymbol\psi(t,x) - \boldsymbol\psi_\infty\|_{\mathbb C^2}
 \,\,=\,\,0, \qquad \text{for all } t\in[0,T].
\end{equation*}
\end{enumerate}
\end{hypothesis}

\begin{remark}
The KM breather \eqref{eq:KM}  satisfies Hypothesis~\ref{hyp:EA}. 
Although we do not have definitive proof, it is reasonable to assume that
the  numerically computed periodically-stationary
solultions discussed in Example~\ref{ex:GL} also satisfy Hypothesis~\ref{hyp:EA}.
\end{remark}

{
\begin{remark}\label{remark:spaces}
Hypothesis~\ref{hyp:EA}  guarantees that for each $t$,
the multiplication operator,  $\widetilde{\mathbf M}(t)$, in~\eqref{eq:OpM} is a bounded
operator on $L^2(\mathbb R, \mathbb C^2)$. 
Since $(D,\beta) \neq (0,0)$, it follows from  \cite[Lemma 3.1.2]{kapitula2013spectral}
that the operator 
$\mathcal L(t) : H^2(\mathbb R, \mathbb C^2) \subset 
 L^2(\mathbb R, \mathbb C^2) \to L^2(\mathbb R, \mathbb C^2)$ is closed.
\end{remark}
}

Next, we have the following proposition, whose proof is self-evident. 

\begin{proposition}
Assume that Hypothesis~\ref{hyp:EA} is met. Then
\begin{equation}
 \mathbf{M}_\infty := \lim\limits_{|x|\to\infty} \widetilde{ \mathbf{M}}(t,x)
\end{equation}
exists and is $t$-independent. Furthermore, the differential operator, 
$\mathcal L$, is exponentially asymptotic in that the leading coefficient, $ \mathbf{B}$, 
is constant and there exists a constant $r>0$ so that 
\begin{equation}
\lim\limits_{|x|\to\infty} e^{r|x|} \,
\left\|\widetilde{\mathbf M}(t,x) - \mathbf{M}_\infty\right\|_{\mathbb C^{2\times 2}} 
\,\,=\,\,0
\qquad \text{for all } t \in [0,T],
\end{equation}
 where   $\| \cdot \|_{\mathbb C^{2\times 2}}$ denotes the matrix norm induced 
 from the Euclidean norm $\| \cdot \|_{\mathbb C^2}$.
\end{proposition}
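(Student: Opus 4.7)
\medskip

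\noindent\textbf{Proof plan.} The plan is to write down $\mathbf{M}_\infty$ explicitly by substituting the constant vector $\boldsymbol\psi_\infty$ into formula \eqref{eq:OpM}, and then to derive the exponential estimate by reducing it, via a polynomial Lipschitz bound, to the exponential estimate on $\boldsymbol\psi(t,x)-\boldsymbol\psi_\infty$ supplied by Hypothesis~\ref{hyp:EA}(3). That $\mathbf B$ is constant is immediate from its definition, so only the matrix-valued multiplier $\widetilde{\mathbf M}(t,x)$ requires work.

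\emph{Step 1 (candidate limit).} I would set
\[
\mathbf{M}_\infty \;:=\; \mathbf{N}_0 + \mathbf{N}_1 |\boldsymbol\psi_\infty|^2 + \mathbf{N}_2 |\boldsymbol\psi_\infty|^4 + \bigl(2\mathbf{N}_1 + 4\mathbf{N}_2 |\boldsymbol\psi_\infty|^2\bigr)\boldsymbol\psi_\infty\boldsymbol\psi_\infty^{T}.
\]
Because $\boldsymbol\psi_\infty\in\mathbb C^2$ is a single constant vector (independent of $t$ and $x$) and the matrices $\mathbf N_0,\mathbf N_1,\mathbf N_2$ are constant, $\mathbf M_\infty$ is manifestly independent of both $t$ and $x$. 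This yields the first assertion as soon as the exponential estimate below is in place.

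\emph{Step 2 (polynomial Lipschitz estimate).} Observe that the map
\[
F\colon\mathbb C^2\to\mathbb C^{2\times 2},\qquad
F(\mathbf z)\;=\;\mathbf N_0+\mathbf N_1|\mathbf z|^2+\mathbf N_2|\mathbf z|^4+\bigl(2\mathbf N_1+4\mathbf N_2|\mathbf z|^2\bigr)\mathbf z\mathbf z^{T},
\]
is polynomial in the real and imaginary parts of $\mathbf z$. Hence for every $R>0$ there is a constant $C_R>0$ with
\[
\|F(\mathbf z_1)-F(\mathbf z_2)\|_{\mathbb C^{2\times 2}}\;\le\;C_R\,\|\mathbf z_1-\mathbf z_2\|_{\mathbb C^2}\qquad\text{whenever }\|\mathbf z_1\|,\|\mathbf z_2\|\le R.
\]
Fix $t\in[0,T]$. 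By Hypothesis~\ref{hyp:EA}(1), $R_t:=\|\boldsymbol\psi(t,\cdot)\|_{L^\infty}$ is finite, so with $R:=\max\{R_t,\|\boldsymbol\psi_\infty\|\}$ the estimate gives, for all $x\in\mathbb R$,
\[
\bigl\|\widetilde{\mathbf M}(t,x)-\mathbf M_\infty\bigr\|_{\mathbb C^{2\times 2}}\;=\;\|F(\boldsymbol\psi(t,x))-F(\boldsymbol\psi_\infty)\|_{\mathbb C^{2\times 2}}\;\le\;C_R\,\|\boldsymbol\psi(t,x)-\boldsymbol\psi_\infty\|_{\mathbb C^2}.
\]

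\emph{Step 3 (pass to the exponential limit).} Multiplying by $e^{r|x|}$, where $r>0$ is the rate from Hypothesis~\ref{hyp:EA}(3), gives
\[
e^{r|x|}\bigl\|\widetilde{\mathbf M}(t,x)-\mathbf M_\infty\bigr\|_{\mathbb C^{2\times 2}}\;\le\;C_R\,e^{r|x|}\|\boldsymbol\psi(t,x)-\boldsymbol\psi_\infty\|_{\mathbb C^2}\;\longrightarrow\;0\quad\text{as }|x|\to\infty,
\]
which is the desired decay. There is no real obstacle here, as the author's remark that the proof is self-evident suggests; the only thing to watch is that $R_t$ and hence $C_R$ may a priori depend on $t$, but since both the hypothesis and the conclusion are stated pointwise in $t\in[0,T]$, this causes no difficulty. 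Combined with the constancy of $\mathbf B$ in~\eqref{eq:OpL}, this establishes the exponential asymptoticity of the differential operator $\mathcal L(t)$.
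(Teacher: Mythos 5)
Your proof is correct and is exactly the argument the paper has in mind: the paper offers no proof at all (it declares the proposition ``self-evident''), and what it is taking for granted is precisely your observation that $\widetilde{\mathbf M}(t,x)=F(\boldsymbol\psi(t,x))$ for a fixed polynomial map $F$, which is locally Lipschitz on the bounded range guaranteed by Hypothesis~\ref{hyp:EA}(1), so the exponential convergence of $\boldsymbol\psi(t,\cdot)$ to the single $t$-independent constant $\boldsymbol\psi_\infty$ transfers directly to $\widetilde{\mathbf M}(t,\cdot)\to F(\boldsymbol\psi_\infty)=:\mathbf M_\infty$. Nothing is missing; your remark that the local Lipschitz constant may depend on $t$ but that this is harmless for a pointwise-in-$t$ conclusion is the right observation.
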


\begin{definition}
The \emph{asymptotic differential operator}, $\mathcal L_\infty$, associated with 
the exponentially asymptotic operator, $\mathcal L(t)$,
is the $t$-independent operator with constant, matrix-valued coefficients given by
\begin{equation}
\mathcal L_\infty \,\,:=\,\,  \mathbf{B} \,\partial^2_x + 
 \mathbf{M}_\infty.
 \label{eq:OpLinf}
\end{equation}
Just as for the operator, $\mathcal L(t)$, we regard $\mathcal L_\infty$ 
as an operator on $L^2(\mathbb R, \mathbb C^2)$ with domain
$H^2(\mathbb R, \mathbb C^2)$.
Furthermore, if we define $ \mathbf{M}(t,x) := \widetilde{ \mathbf{M}}(t,x) -  
\mathbf{M}_\infty$, then
\begin{equation}
\mathcal L(t) = \mathcal L_\infty +  \mathbf{M}(t),
\end{equation}
where $\mathbf{M}(t,x)\to \mathbf 0$ as $|x|\to\infty$.
\end{definition}

We now review the definition of the essential spectrum we use in this paper. 

\begin{definition}\label{SpecDefs}
Let $X$ be a Banach space and let $\mathcal B(X)$ denote the space of bounded linear operators on $X$. 
Let $\mathcal L: D(\mathcal L)\subset X\to X$ be a closed
linear operator with domain $D(\mathcal L)$ that is dense in $X$. 
The \emph{resolvent set} of $\mathcal L$ is 
\begin{equation}
\rho(\mathcal L) \,\,:=\,\, \{ \lambda \in \mathbb C \, |\, 
\mathcal L - \lambda \text{ is invertible and } (\mathcal L - \lambda)^{-1} 
\in \mathcal B(X) \},
\end{equation}
and for each $\lambda \in \rho( \mathcal L)$, the \emph{resolvent operator} is 
$R(\lambda : \mathcal L) := (\mathcal L - \lambda)^{-1}$. The \emph{spectrum} of $\mathcal L$ is $\sigma(\mathcal L) \,\,:=\,\, \mathbb C \setminus \rho(\mathcal L)$.
The \emph{point spectrum} of $\mathcal L$ is
\begin{equation}
\sigma_{\rm pt}(\mathcal L) \,\,:=\,\, \{ \lambda \in \mathbb C \, |\, 
\operatorname{Ker}(\mathcal L - \lambda) \neq \{0\} \}.
\end{equation}
The \emph{Fredholm point spectrum} of $\mathcal L$ is the subset of 
$\sigma_{\rm pt}(\mathcal L)$ defined by
\begin{equation}
\sigma^{\mathcal F}_{\rm pt}(\mathcal L) \,\,:=\,\, \{ \lambda \in \mathbb C \, |\, 
\mathcal L - \lambda \text{ is Fredholm, }
\operatorname{Ind}(\mathcal L - \lambda)=0, 
\text{ and } \operatorname{Ker}(\mathcal L - \lambda) \neq \{0\} \},
\end{equation}
and the \emph{essential spectrum} of $\mathcal L$ is 
$\sigma_{\rm{ess}}(\mathcal L) \,\,:=\,\, \sigma(\mathcal L) \setminus 
\sigma^{\mathcal F}_{\rm{pt}}(\mathcal L)$. 
We observe that $\sigma(\mathcal L) = \sigma_{\rm pt}(\mathcal L) \cup 
\sigma_{\rm{ess}}(\mathcal L)$, but note that this union may not be disjoint. 
\end{definition} 

\begin{remark}
Since the operators we consider are not self-adjoint, there are several
non-equivalent definitions of the essential spectrum~\cite{edmunds2018spectral}.
An argument that involves the 
closed graph theorem~\cite{kato2013perturbation} shows that 
with the definition we use, 
$\sigma_{\rm{ess}}(\mathcal L)$
consists of those $\lambda\in\mathbb C$ so that either
(a) $\mathcal L - \lambda$ is Fredholm but has 
$\operatorname{Ind}(\mathcal L - \lambda) \neq 0$ or (b) $\mathcal L - \lambda$ is  not Fredholm.
This definition  gives the largest subset of the spectrum that is invariant under compact perturbations~\cite{edmunds2018spectral}. However, the operators, $\mathcal L$, we 
consider are given as perturbations of constant coefficient differential operators, 
$\mathcal L_\infty$, by multiplication operators, which are not compact.
Nevertheless, we will show below that the operator
 $\mathcal L(t)$ is a \emph{relatively compact perturbation} of $\mathcal L_\infty$,
 by which we mean that $\exists \lambda \in \rho(\mathcal L_\infty)$ so that
$(\mathcal L(t) - \mathcal L_\infty) (\mathcal L_\infty-\lambda)^{-1} : X \to X$ is compact.
Consequently, by Weyl's essential spectrum theorem~\cite{kapitula2013spectral},
$\sigma_{\rm{ess}}(\mathcal L(t)) = \sigma_{\rm{ess}}(\mathcal L_\infty)$.
\end{remark}

To calculate the spectrum of the asymptotic operator, $\mathcal L_\infty$,
we convert the equation, $(\mathcal L_\infty - \lambda)\mathbf p = \mathbf 0$,
 to the first-order system,
 \begin{equation}
 \partial_x \mathbf Y \,\,=\,\, \mathbf A_\infty(\lambda) \mathbf Y,
 \label{eq:Y}
 \end{equation}
 where 
 $\mathbf Y = \begin{bmatrix} \mathbf p \\ \mathbf p_x\end{bmatrix} \in \mathbb C^4$
 and $\mathbf A_\infty(\lambda)\in \mathbb C^{4\times 4}$ is the constant 
 matrix
 \begin{equation}
 \mathbf A_\infty(\lambda) \,\, := \,\, 
 \begin{bmatrix} 
 \mathbf{0} & \mathbf{I} \\ 
 \mathbf B^{-1}(\lambda - \mathbf M_\infty) & \mathbf{0}
 \end{bmatrix}.
 \end{equation}
  
\begin{proposition}\label{prop:EssSpecLinfty}
Assume that Hypothesis~\ref{hyp:EA} is met. 
Then, $\sigma_{\rm{pt}}(\mathcal L_\infty) = \emptyset$ and 
\begin{equation}
\sigma_{\rm{ess}}(\mathcal L_\infty) \,\,=\,\, \{ \lambda \in \mathbb C\, |\, \exists \mu\in \mathbb R: \operatorname{det}[\lambda - \mathbf{M}_\infty + \mu^2 \mathbf{B}]=0 \}.
\end{equation}
\end{proposition}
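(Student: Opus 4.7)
The plan is to reduce the spectral problem for the constant-coefficient operator $\mathcal L_\infty$ to an algebraic dispersion relation and then invoke the standard dichotomy for exponentially asymptotic operators from Kapitula--Promislow. Since $(D,\beta)\neq(0,0)$, we have $\det \mathbf B = \beta^2 + D^2/4 > 0$, so $\mathbf B$ is invertible; this is what allows the rewriting $(\mathcal L_\infty - \lambda)\mathbf p = \mathbf 0$ as the first-order system $\partial_x \mathbf Y = \mathbf A_\infty(\lambda)\mathbf Y$ displayed in \eqref{eq:Y}. The Kapitula--Promislow dispersion-relation theorem (their Theorem~3.1.11 or equivalent) then says that for such constant-coefficient, exponentially asymptotic operators, $\lambda \notin \sigma_{\mathrm{ess}}(\mathcal L_\infty)$ if and only if $\mathbf A_\infty(\lambda)$ is hyperbolic, i.e., has no purely imaginary eigenvalue.

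The essential spectrum formula then follows from a direct computation. The matrix $\mathbf A_\infty(\lambda)$ has $i\mu$ as an eigenvalue for some $\mu\in\mathbb R$ iff
\[
0 \,=\, \det\bigl(i\mu\, \mathbf I_4 - \mathbf A_\infty(\lambda)\bigr),
\]
and expanding this $4\times 4$ block determinant via the Schur complement gives $\det\bigl[-\mu^2 \mathbf I_2 - \mathbf B^{-1}(\lambda - \mathbf M_\infty)\bigr]$. Multiplying by the nonzero scalar $\det \mathbf B$ yields the equivalent condition $\det[\lambda - \mathbf M_\infty + \mu^2 \mathbf B] = 0$, which is exactly the characterization in the proposition. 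A slightly cleaner alternative, which avoids invoking the first-order system, is to note that $\mathcal L_\infty$ is a Fourier multiplier with matrix symbol $\mathbf M_\infty - \mu^2 \mathbf B$; Plancherel and a standard multiplier argument give both that $\lambda \in \rho(\mathcal L_\infty)$ iff the symbol matrix is invertible for every real $\mu$, and that the operator fails to be Fredholm precisely at the values where the symbol degenerates.

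For the emptiness of $\sigma_{\mathrm{pt}}(\mathcal L_\infty)$, I would argue as follows. If some $\mathbf p \in H^2(\mathbb R,\mathbb C^2)\setminus\{0\}$ satisfied $(\mathcal L_\infty - \lambda)\mathbf p = \mathbf 0$, then $\mathbf Y = (\mathbf p, \mathbf p_x)^T$ would be a nontrivial solution of the constant coefficient ODE system that lies in $L^2$. Any such solution is a finite sum of terms $x^k e^{\kappa x} \mathbf v$ with $\kappa$ running over the eigenvalues of $\mathbf A_\infty(\lambda)$; decay at $+\infty$ forces $\operatorname{Re}\kappa < 0$ while decay at $-\infty$ forces $\operatorname{Re}\kappa > 0$, and the two stable subspaces have trivial intersection. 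The Fourier-side version is even shorter: $(\mathbf M_\infty - \lambda - \mu^2 \mathbf B)\hat{\mathbf p}(\mu) = 0$ a.e.\ in $\mu$, but $\det(\mathbf M_\infty - \lambda - \mu^2\mathbf B)$ is a nonconstant polynomial in $\mu$ and therefore vanishes on a set of measure zero, forcing $\hat{\mathbf p}\equiv 0$.

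The main obstacle I anticipate is not computational but invocational: carefully verifying that the Kapitula--Promislow dispersion-relation theorem applies to our setting, where $\mathbf B$ is a non-diagonal but invertible $2\times 2$ real matrix, and in particular that a purely imaginary eigenvalue of $\mathbf A_\infty(\lambda)$ actually obstructs the Fredholm property of $\mathcal L_\infty - \lambda$ (rather than just its invertibility). Once this is pinned down, the rest is the algebraic manipulation of a $4\times 4$ block determinant.
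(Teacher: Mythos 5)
Your proposal is correct and follows essentially the same route as the paper: reduce $(\mathcal L_\infty-\lambda)\mathbf p=\mathbf 0$ to the first-order system \eqref{eq:Y}, invoke the Kapitula--Promislow characterization of $\sigma_{\rm{ess}}(\mathcal L_\infty)$ via purely imaginary eigenvalues of $\mathbf A_\infty(\lambda)$ (the paper cites their Lemma~3.1.10), and evaluate the block determinant by a Schur-complement manipulation, with the point spectrum ruled out because no nontrivial solution of the constant-coefficient system decays at both $x\to+\infty$ and $x\to-\infty$. The only differences are cosmetic: the paper premultiplies $\mathbf A_\infty(\lambda)-i\mu$ by $\bigl[\begin{smallmatrix}\mathbf 0 & \mathbf B\\ \mathbf B & \mathbf 0\end{smallmatrix}\bigr]$ before applying the Schur formula, whereas you absorb $\det\mathbf B$ at the end, and your Fourier-multiplier remarks are an optional alternative not used by the paper.
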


\begin{proof}
Since \eqref{eq:Y} does not have solutions that decay as $x\to +\infty$ and as 
 $x\to -\infty$, $\sigma_{\rm{pt}}(\mathcal L_\infty) = \emptyset$.
Arguing as in the proof of Kapitula and Promislow~\cite[Lemma 3.1.10]{kapitula2013spectral}, we have that 
$\lambda \in \sigma_{\rm{ess}}(\mathcal L_\infty)$ precisely when 
the matrix $\mathbf A_\infty(\lambda)$ has a pure imaginary eigenvalue, that is
\begin{equation}\label{eq:EssSpecLinfty}
\sigma_{\rm{ess}}(\mathcal L_\infty) \,\,=\,\,
\{ \lambda \in \mathbb C\, |\, \exists \mu\in \mathbb R: \operatorname{det}
[\mathbf{A}_\infty(\lambda) - i\mu]=0 \}.
\end{equation}
Premultiplying $\mathbf{A}_\infty(\lambda) - i\mu$ by the invertible matrix
$\begin{bmatrix} \mathbf 0 & \mathbf B \\ \mathbf B & \mathbf 0 \end{bmatrix}$,
and applying the Schur determinental formula~\cite{meyer2000matrix},
we find that $\lambda \in \sigma_{\rm{ess}}(\mathcal L)$ if and only if
\begin{equation}
0 \,\,=\,\, 
\operatorname{det} \begin{bmatrix} \lambda - \mathbf{M}_\infty & -i\mu \mathbf B \\
-i\mu\mathbf{B} & \mathbf{B} \end{bmatrix}
\,\,=\,\,\operatorname{det}\mathbf{B}
\operatorname{det}[\lambda - \mathbf{M}_\infty + \mu^2 \mathbf{B}].
\end{equation}
\qed
\end{proof}

\begin{example}[KM Breather]
For the KM breather discussed in Example~\ref{ex:KM},
 \begin{equation}
 \mathbf{M}_\infty = \begin{bmatrix}0&0\\2\nu_0^2&0\end{bmatrix}.
 \end{equation}
Consequently, 
\begin{equation}
\sigma_{\rm{ess}}(\mathcal L_\infty)
\,\,=\,\,
\{ \lambda \in \mathbb C \,\,|\,\, \lambda = \pm i \tfrac \mu 2 \sqrt{\mu^2-4\nu_0^2}
\text{ for some } \mu\in\mathbb R \},
\end{equation}
which is the union of the imaginary axis and the interval $[-\nu_0,\nu_0]$ in the 
real axis.
This result is consistent with a result of  Cuevas-Maraver \emph{et al.}~\cite{cuevas2017floquet}, who performed a modulation
instability analysis to show that the frequency, $\omega$, and wave number, $k$,
of a perturbation about a plane wave background, $\psi_\infty(t,x)=\nu_0$,
satisfy the dispersion relation 
$\omega = \pm \frac k 2 \sqrt{k^2-4\nu_0^2}$.
\end{example}

\begin{example}[CQ-CGL Breathers]
For the CQ-CGL breathers discussed in Example~\ref{ex:GL}, 
 $\mathbf{M}_\infty =  \delta \mathbf{I}$. 
 Consequently, 
\begin{equation}
\sigma_{\rm{ess}}(\mathcal L_\infty)
\,\,=\,\,
\{ \lambda \in \mathbb C \,\,|\,\, \lambda = \delta - \mu^2 (\beta \pm i D/2) 
\text{ for some } \mu\in\mathbb R \},
\end{equation}
which is a pair of half-lines in the complex plane that are symmetric 
about the real axis~\cite{PhysicaD116p95,shen2016spectra}. 
If the physical system being modeling includes
linear loss ($\delta < 0$) and a spectral filter ($\beta>0$), then the essential spectrum
is stable. 
\end{example}

We conclude this section by showing that
$\mathcal L(t)$ is a relatively compact perturbation of $\mathcal L_\infty$.
This result plays an important role in the proof of our main result, 
Theorem~\ref{thm:EssSpecU}, on the essential spectrum of the monodromy operator. 

\begin{theorem}\label{thm:EssSpec}
Assume that Hypothesis~\ref{hyp:EA} is met. 
Then,  the differential operator, 
$\mathcal L(t)$, given in \eqref{eq:OpL}, is a relatively compact perturbation of 
 $\mathcal L_\infty$.
\end{theorem}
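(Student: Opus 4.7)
The plan is to reduce the relative compactness to compactness of a multiplication operator and then approximate that multiplier in operator norm by compactly supported ones. Concretely, I will exhibit one $\lambda\in\rho(\mathcal{L}_\infty)$ for which $\mathbf{M}(t)(\mathcal{L}_\infty-\lambda)^{-1}:L^2(\mathbb{R},\mathbb{C}^2)\to L^2(\mathbb{R},\mathbb{C}^2)$ is compact. By Proposition~\ref{prop:EssSpecLinfty}, the essential spectrum of $\mathcal{L}_\infty$ is a union of algebraic curves and its point spectrum is empty, so $\rho(\mathcal{L}_\infty)\neq\emptyset$ and I fix such a $\lambda$. Because $(D,\beta)\neq(0,0)$ one has $\det\mathbf{B}=\beta^2+D^2/4>0$, so $\mathbf{B}$ is invertible and $\mathcal{L}_\infty=\mathbf{B}\partial_x^2+\mathbf{M}_\infty$ is a constant-coefficient elliptic system on the line; a direct Fourier-multiplier computation then shows that $(\mathcal{L}_\infty-\lambda)^{-1}:L^2(\mathbb{R},\mathbb{C}^2)\to H^2(\mathbb{R},\mathbb{C}^2)$ is bounded. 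It therefore suffices to prove that the multiplication operator $\mathbf{M}(t):H^2(\mathbb{R},\mathbb{C}^2)\to L^2(\mathbb{R},\mathbb{C}^2)$ is compact.

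To establish this compactness, I approximate $\mathbf{M}(t)$ by compactly supported multipliers. For $R>0$, pick $\chi_R\in C_c^\infty(\mathbb{R})$ with $\chi_R\equiv 1$ on $[-R,R]$ and $\operatorname{supp}\chi_R\subset[-R-1,R+1]$. The operator $\chi_R\mathbf{M}(t):H^2(\mathbb{R},\mathbb{C}^2)\to L^2(\mathbb{R},\mathbb{C}^2)$ is compact: any bounded sequence in $H^2(\mathbb{R},\mathbb{C}^2)$ has a subsequence that converges in $L^2([-R-1,R+1],\mathbb{C}^2)$ by the Rellich--Kondrachov theorem (equivalently, by the Kolmogorov--Riesz criterion invoked in the excerpt), and multiplication by the bounded, compactly supported matrix function $\chi_R\mathbf{M}(t,\cdot)$ then yields convergence in $L^2(\mathbb{R},\mathbb{C}^2)$. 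The exponential decay in Hypothesis~\ref{hyp:EA}(iii) yields the operator-norm estimate
\begin{equation*}
\|\mathbf{M}(t)-\chi_R\mathbf{M}(t)\|_{\mathcal{B}(H^2,L^2)} \,\,\leq\,\, \|(1-\chi_R)\,\mathbf{M}(t,\cdot)\|_{L^\infty(\mathbb{R},\mathbb{C}^{2\times 2})} \,\,\xrightarrow[R\to\infty]{}\,\, 0,
\end{equation*}
so $\mathbf{M}(t)$ is a norm limit of compact operators, and hence itself compact from $H^2$ to $L^2$. Composing with $(\mathcal{L}_\infty-\lambda)^{-1}$ proves the theorem.

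No step presents a genuine obstacle; the main care lies in tracking norms in the cutoff estimate and in verifying that the resolvent truly maps into $H^2$, which rests on the ellipticity provided by $(D,\beta)\neq(0,0)$. As a conceptually cleaner alternative, one could apply the Kolmogorov--Riesz theorem directly to the image of the $H^2$-unit ball under $\mathbf{M}(t)$: uniform $L^2$-boundedness is automatic from $H^2(\mathbb{R})\hookrightarrow L^\infty(\mathbb{R})$, equi-tightness follows from the decay of $\mathbf{M}(t,x)$ as $|x|\to\infty$, and $L^2$-equi-continuity of translates follows from the uniform $H^1$-bound. Either route uses the exponential decay in Hypothesis~\ref{hyp:EA} in an essential way.
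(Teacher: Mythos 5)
Your proposal is correct, and its main route differs from the one taken in the paper. The paper fixes a bounded family $\mathcal H\subset L^2(\mathbb R,\mathbb C^2)$ and verifies the three conditions of the Kolmogorov--Riesz criterion directly for the image $\mathbf M(t)(\mathcal L_\infty-\lambda)^{-1}(\mathcal H)$: boundedness, uniform smallness of tails (from the exponential decay of $\mathbf M(t,\cdot)$), and $L^2$-equicontinuity of translates, the last via the difference-quotient estimate $\|f(\cdot+y)-f\|_{L^2}\le |y|\,\|f'\|_{L^2}$ applied to $f=\mathbf M(t)g\in H^1$. This is essentially the alternative you sketch in your closing paragraph. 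Your primary argument instead factors the problem as ``$(\mathcal L_\infty-\lambda)^{-1}:L^2\to H^2$ bounded'' composed with ``$\mathbf M(t):H^2\to L^2$ compact,'' and proves the latter by cutting off, invoking Rellich--Kondrachov on a bounded interval, and passing to the operator-norm limit. Both arguments are sound; all the ingredients you use (invertibility of $\mathbf B$, closedness of $\mathcal L_\infty$ on $H^2$, emptiness of $\sigma_{\rm pt}(\mathcal L_\infty)$ so that $\rho(\mathcal L_\infty)\neq\emptyset$) are available from Remark~\ref{remark:spaces} and Proposition~\ref{prop:EssSpecLinfty}. One genuine advantage of your cutoff route is worth recording: it uses only that $\mathbf M(t,\cdot)$ is bounded and tends to $\mathbf 0$ as $|x|\to\infty$, and never needs a bound on $\mathbf M_x(t,\cdot)$, whereas the paper's verification of the translation condition relies on $\boldsymbol\psi_x(t,\cdot)\in L^\infty$ (part~(2) of Hypothesis~\ref{hyp:EA}) to place $\mathbf M(t)g$ in $H^1$. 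Conversely, the paper's direct Kolmogorov--Riesz argument is self-contained on the whole line and makes explicit exactly where the spatial decay enters, which is why the authors chose it; as they note in a footnote, the exponential rate in Hypothesis~\ref{hyp:EA}(3) is not essential for either argument, and your norm estimate $\|(1-\chi_R)\mathbf M(t,\cdot)\|_{L^\infty}\to 0$ makes that point transparently. The only cosmetic quibble is in your alternative sketch: the $L^2$-boundedness of $\mathbf M(t)g$ for $g$ in the $H^2$-unit ball follows already from $\mathbf M(t,\cdot)\in L^\infty$ and $g\in L^2$; the Sobolev embedding $H^2(\mathbb R)\hookrightarrow L^\infty(\mathbb R)$ is not needed there.
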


Versions of this result are well known folklore. For example, Kapitula and Promislow~\cite[Theorem 3.1.11]{kapitula2013spectral} provide a proof 
in the special case that  $\boldsymbol\psi - \boldsymbol\psi_\infty$ is 
compactly supported.  
For the sake of completeness, and because it may be of independent
interest to some readers, we provide a proof of the general case. This proof is based on a 
characterization of compact subsets of $L^2(\mathbb R)$ due to Kolmogorov and Riesz~\cite{hanche2010kolmogorov,reed1980methods}.

In the sequel, for each Banach space, $X$, we let $\| \cdot \|_X$ denote the standard
norm on $X$.

\begin{proof}
Let $\lambda\in \rho(\mathcal L_\infty)$. We must show that the operator
$\mathcal K :=   \mathbf M(t) \circ (\mathcal L_\infty - \lambda)^{-1}
\,\,:\,\, L^2(\mathbb R,\mathbb C^2) \to H^2(\mathbb R,\mathbb C^2)
\to L^2(\mathbb R,\mathbb C^2)$ is compact. It suffices to show that
for any bounded family of functions, $\mathcal H\subset L^2(\mathbb R,\mathbb C^2) $,
the subset $\mathcal F = \mathcal K(\mathcal H)\subset L^2(\mathbb R,\mathbb C^2) $
is precompact, or equivalently is  totally bounded. A classical theorem due to Kolmogorov and Riesz~\cite{hanche2010kolmogorov} states that
$\mathcal F \subset L^2(\mathbb R,\mathbb C^2) $ is totally bounded  if and only if
the following three conditions hold:

\begin{enumerate}
\item $\mathcal F $ is bounded,
\item for all $\epsilon > 0$ there is an $R>0$ so that for all $f \in \mathcal F$,
\begin{equation}
\int\limits_{|x|>R} \| f(x) \|^2_{\mathbb C^2}\, dx \,\, < \,\, \epsilon^2,\qquad\text{and}
\end{equation}
\item for all $\epsilon >0$ there is a  $\delta>0$ so that for all $f \in \mathcal F$ and
$y\in\mathbb R$ with $|y|<\delta$, 
\begin{equation}
\int\limits_{\mathbb R} \| f(x+y) -f(x)\|^2_{\mathbb C^2}\, dx \,\, < \,\, \epsilon^2.
\end{equation}
\end{enumerate}

The first condition holds as the subset $\mathcal H$
and the operator $\mathcal K$ are bounded. For the second condition, we first
observe that there is a $C>0$ so that for all $h\in\mathcal H$,
\begin{equation}
\| (\mathcal L_\infty - \lambda)^{-1} h \|_{H^2(\mathbb R,\mathbb C^2)} < C.
\end{equation}
Next, by Hypothesis~\ref{hyp:EA},  there is an $\widetilde R >0$ so that
$\| \mathbf M(t,x)\|_{\mathbb C^{2\times 2}} < e^{-r|x|}/C$ for all $|x| > \widetilde R$.
Let $R>\widetilde R$. Since every $f\in \mathcal F = \mathcal K(\mathcal H)$
is of the form  $f=\mathbf M(t)g$ for some 
$g\in (\mathcal L_\infty - \lambda)^{-1}(\mathcal H)$,
\begin{equation}
\int_{|x|>R} \| f(x) \|^2_{\mathbb C^2}\, dx \,\,<\,\, \frac{1}{C^2} e^{-2rR} \|g\|_{H^2(\mathbb R,\mathbb C^2)} \,\,<\,\, \epsilon^2,
\end{equation}
provided  that $R > |\log\epsilon|/r$.\footnote{
We observe that the third condition 
in Hypothesis~\ref{hyp:EA} that
$\boldsymbol\psi$ decays exponentially can be significantly relaxed. In particular, 
we do not even require that $\boldsymbol\psi$ or $\boldsymbol\psi_x$ 
belong to $L^2(\mathbb R,\mathbb C^2)$.
}

For the third condition, we first observe that   $f=\mathbf M(t)g\in H^1(\mathbb R,\mathbb C^2)$,  by Hypothesis~\ref{hyp:EA} and  since $g\in H^2(\mathbb R,\mathbb C^2)$. 
Appealing to a result
in Evans~\cite[\S 5.8.2]{evans2010partial} on difference quotients for $H^1$ functions, we
have that
\begin{align*}
&\,\, \,\,\,\,\,\,\int_{\mathbb R} \| f(x+y) - f(x) \|^2_{\mathbb C^2} \, dx 
 \leq |y|^2 \, \| f' \|^2_{L^2(\mathbb R,\mathbb C^2)}  \\
& \leq |y|^2 \operatorname{max} \{ \| \mathbf M(t)\|^2_{L^\infty(\mathbb R, \mathbb C^2)},
\| \mathbf M_x(t)\|^2_{L^\infty(\mathbb R, \mathbb C^2)} \} \, \| g \|^2_{H^2(\mathbb R, \mathbb C^2)},
\end{align*}
which can be made arbitrarily small, provided $y$ is close enough to zero.\qed
\end{proof}

\section{The Evolution Family}\label{Sec:EvolFamily}

In this section, we establish the existence of an evolution family for  the linearized 
equation~\eqref{eq:Linearization} by applying classical results on the existence, uniqueness, and regularity of 
solutions of initial-value problems for non-autonomous linear differential equations in Banach spaces. 
In Section~\ref{Sec:EssSpecM}, the evolution family will be used to define the monodromy operator.

We study solutions, $\mathbf u:[s,\infty) \to H^2(\mathbb R,\mathbb C^2)$, 
of the  initial-value problem 
\begin{equation}
\begin{aligned}
\frac{\partial \mathbf u}{\partial t} \,\,&=\,\, \mathcal L(t) \mathbf u, \qquad \text{for } t>s,\\
\mathbf u(s) &= \mathbf v,
\end{aligned}
\label{eq:IVP}
\end{equation}
where $\mathcal L(t) = \mathbf B \,\partial_x^2 + \widetilde{\mathbf M}(t)$ is the
$t$-dependent family of operators on $L^2(\mathbb R, \mathbb C^2)$ defined in~\eqref{eq:OpL}, and
$\mathbf v \in H^2(\mathbb R,\mathbb C^2)$.

The existence of solutions of  linear differential equations
in Banach spaces is typically established using 
semigroup theory.
Since the operator, $\mathcal L(t)$, is $t$-dependent, we utilize the theory of
evolution families~\cite{pazy2012semigroups}, 
in which the solution of~\eqref{eq:IVP}
is represented in the form, $\mathbf u(t) = \mathcal U(t,s)\mathbf v$, where $\mathcal U(t,s)$ is an evolution operator. The following theorem establishes conditions 
on a  solution, $\psi=\psi(t,x)$, of the nonlinear wave equation~\eqref{eq:NWE}
that ensure the existence of an evolution operator, $\mathcal U(t,s)$, for the
linearized equation, \eqref{eq:IVP}. 

\begin{hypothesis}\label{hyp:PsiEvOp}
The $T$-periodic solution, $\boldsymbol\psi$, about which \eqref{eq:NWEsys} is 
linearized has the property that
both $\boldsymbol\psi$ and $\boldsymbol\psi_t$ are 
bounded and are  continuous on $[0,T]\times \mathbb R$,
uniformly in $x$. 
\end{hypothesis}

\begin{definition}
Let   
$\mathbf A =\mathbf A(t,x): [0,\infty)\times \mathbb R \to \mathbb C^{2\times 2}$
be a bounded matrix-valued function. We define
 \begin{equation}
 \| \mathbf A \|_\infty \,\,:=\,\, \sup\limits_{(t,x)} \| \mathbf A (t,x)\|_{\mathbb C^{2\times 2}}.
 \end{equation} 
\end{definition}

\begin{theorem}\label{thm:EvOp}
Assume that Hypothesis~\ref{hyp:PsiEvOp} is met 
and that  $\beta \geq 0$. 
Then, there exists a unique evolution operator, $\mathcal U(t,s) \in \mathcal B(L^2(\mathbb R, \mathbb C^2))$, 
for $0\leq s \leq t < \infty$, such that
\begin{enumerate}
\item $\| \mathcal U(t,s) \|_{\mathcal B(L^2(\mathbb R, \mathbb C^2))} \leq 
\exp [ \, \| \widetilde{\mathbf M} \|_\infty (t-s) \,] $,
\item $\mathcal U(t,s) (H^2(\mathbb R, \mathbb C^2)) \subset 
H^2(\mathbb R, \mathbb C^2)$, and
\item For each $s$, $\mathcal U(\cdot,s)$ is strongly continuous in that
for all $\mathbf v\in L^2(\mathbb R, \mathbb C^2)$, the mapping
$t\mapsto \mathcal U(t,s)\mathbf v$ is continuous, and
\item For each $\mathbf v \in H^2(\mathbb R, \mathbb C^2)$, the function 
$\mathbf u(t) = \mathcal U(t,s)\mathbf v$ is the unique solution of the initial value problem~\eqref{eq:IVP} for which $\mathbf u \in C([s,\infty),H^2(\mathbb R, \mathbb C^2))$ 
and $\mathbf u \in C^1((s,\infty),L^2(\mathbb R, \mathbb C^2))$.
\end{enumerate}
\end{theorem}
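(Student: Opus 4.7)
The natural approach is to write $\mathcal{L}(t) = \mathcal{L}_0 + \widetilde{\mathbf{M}}(t)$, where $\mathcal{L}_0 := \mathbf{B}\,\partial_x^2$ is the autonomous principal part with domain $H^2(\mathbb{R},\mathbb{C}^2)$, and $\widetilde{\mathbf{M}}(t)$ is, by Hypothesis~\ref{hyp:PsiEvOp}, a bounded multiplication operator on $L^2(\mathbb{R},\mathbb{C}^2)$ depending continuously (in fact, $C^1$) on $t$. We are then in the setting of a bounded, time-dependent perturbation of a fixed infinitesimal generator, and the plan is to invoke the classical Pazy theory (\cite{pazy2012semigroups}, Chapter~5 on evolution systems).

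First, I would verify that $\mathcal{L}_0$ generates a $C_0$-semigroup on $L^2(\mathbb{R},\mathbb{C}^2)$ with $\|e^{t\mathcal{L}_0}\|\le 1$. Writing $\mathbf{B} = \beta\mathbf{I} + (D/2)\mathbf{J}$ with $\mathbf{J} = \bigl[\begin{smallmatrix}0&-1\\ 1&0\end{smallmatrix}\bigr]$ and taking the Fourier transform in $x$, the symbol becomes $-\mu^2\mathbf{B}$, so
\begin{equation*}
\bigl\|e^{-t\mu^2 \mathbf{B}}\bigr\|_{\mathbb{C}^{2\times 2}}
 = e^{-t\mu^2\beta}\,\bigl\|e^{-t\mu^2(D/2)\mathbf{J}}\bigr\|_{\mathbb{C}^{2\times 2}} = e^{-t\mu^2\beta}\le 1
\end{equation*}
since $\mathbf{J}$ is unitary and $\beta\ge 0$. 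Thus $\mathcal{L}_0$ generates a contraction $C_0$-semigroup with domain $H^2(\mathbb{R},\mathbb{C}^2)$, and when $\beta>0$ this semigroup is analytic.

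Second, I would verify that $\widetilde{\mathbf{M}}:[0,\infty)\to\mathcal{B}(L^2(\mathbb{R},\mathbb{C}^2))$ is strongly continuous (indeed continuous in operator norm), with $\|\widetilde{\mathbf{M}}(t)\|\le\|\widetilde{\mathbf{M}}\|_\infty$. The continuity follows because $\widetilde{\mathbf{M}}$ is a polynomial in $\boldsymbol{\psi}$ and $\bar{\boldsymbol{\psi}}$, both bounded and continuous on $[0,T]\times\mathbb{R}$ uniformly in $x$. By Pazy's bounded-perturbation theorem for evolution systems, the Volterra integral equation
\begin{equation*}
\mathcal{U}(t,s)\mathbf{v} = e^{(t-s)\mathcal{L}_0}\mathbf{v} + \int_s^t e^{(t-\tau)\mathcal{L}_0}\,\widetilde{\mathbf{M}}(\tau)\,\mathcal{U}(\tau,s)\mathbf{v}\,d\tau
\end{equation*}
has a unique solution $\mathcal{U}(t,s)\in\mathcal{B}(L^2(\mathbb{R},\mathbb{C}^2))$ defining an evolution family. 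A Gronwall argument applied to this identity, combined with $\|e^{t\mathcal{L}_0}\|\le 1$, delivers property~(1), while strong continuity (property~(3)) is built into Pazy's construction.

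The remaining properties (2) and (4) require showing that $H^2$ is invariant and that $t\mapsto\mathcal{U}(t,s)\mathbf{v}$ yields a classical solution. Since $\boldsymbol{\psi}_t$ is bounded and continuous (Hypothesis~\ref{hyp:PsiEvOp}), the family $\widetilde{\mathbf{M}}(\cdot)$ is $C^1$ into $\mathcal{B}(L^2)$, which, together with the fact that $\mathcal{L}(t)$ shares the constant domain $H^2$, places us exactly in the framework of Pazy Theorem 5.1.2 / 5.4.3 for classical solutions. This gives $\mathbf{u}\in C([s,\infty),H^2)\cap C^1((s,\infty),L^2)$ whenever $\mathbf{v}\in H^2$, yielding both the invariance of $H^2$ under $\mathcal{U}(t,s)$ and the differentiability asserted in (4); uniqueness is standard (a difference of solutions satisfies a linear homogeneous equation with zero initial data and Gronwall closes the argument). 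The main technical obstacle I anticipate is a clean justification of the $H^2$-invariance, since the multiplication operator $\widetilde{\mathbf{M}}(t)$ need not be bounded on $H^2$ without extra regularity of $\boldsymbol{\psi}$ in $x$; in the analytic regime $\beta>0$ one can exploit smoothing of $e^{t\mathcal{L}_0}$ in the Duhamel iteration, while for $\beta=0$ one has to work at the level of the classical-solution formulation to ensure that $\mathcal{L}(t)\mathbf{u}(t)\in L^2$ forces $\mathbf{u}(t)\in H^2$ for all $t\ge s$.
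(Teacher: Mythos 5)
Your proposal is correct and follows essentially the same route as the paper: decompose $\mathcal L(t)=\mathbf B\,\partial_x^2+\widetilde{\mathbf M}(t)$, show via the Fourier transform that the principal part generates a contraction $C_0$-semigroup on $L^2(\mathbb R,\mathbb C^2)$, verify that $\widetilde{\mathbf M}(t)$ is a uniformly bounded multiplication operator depending $C^1$ on $t$, and invoke the Pazy Chapter~5 evolution-system theory (the paper uses the Hille--Yosida theorem together with Theorems~2.3 and~4.8 of that chapter, which is the same bounded-perturbation/constant-domain framework you describe). Your closing worry about $H^2$-invariance is resolved by exactly that citation: once the family is stable with constant domain $H^2$ and $t\mapsto\mathcal L(t)\mathbf v$ is $C^1$ into $L^2$ for $\mathbf v\in H^2$, Pazy's theorem yields the invariance of $H^2$ and the classical-solution regularity as part of its conclusion, which is also all the paper does.
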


\begin{remark}
In the case that $\beta > 0$, for any $\mathbf v \in L^2(\mathbb R, \mathbb C^2)$
we have that
$\mathbf u(t) = \mathcal U(t,s)\mathbf v \in  H^2(\mathbb R, \mathbb C^2)$.
However, when  $\beta = 0$, we require that $\mathbf v \in H^2(\mathbb R, \mathbb C^2)$
in order that $\mathbf u(t) \in H^2(\mathbb R, \mathbb C^2)$.
\end{remark}

\begin{proof}
The theorem follows from a combination of the 
Hille-Yosida Theorem (see Theorem~3.1 of Pazy~\cite[Ch. 1]{pazy2012semigroups}) and Theorems~2.3 and 4.8 of \cite[Ch. 5]{pazy2012semigroups}. The following three
lemmas ensure that
the hypotheses of these three theorems hold. 

\begin{lemma}\label{lemma:HY}
The linear operator, $\mathbf B \partial_{x}^2: H^2(\mathbb R,\mathbb C^2) \subset 
L^2(\mathbb R,\mathbb C^2) \to L^2(\mathbb R,\mathbb C^2)$  is closed with domain $H^2(\mathbb R,\mathbb C^2)$. Furthermore, $\mathbb R^+ \subset \rho( \mathbf B \partial_{x}^2 )$ and
the resolvent operator (see Definition~\ref{SpecDefs}) satisfies
\begin{equation}\label{eq:HYResBound}
\| R(\lambda : \mathbf B \partial_{x}^2)\|_{\mathcal B(L^2(\mathbb R,\mathbb C^2))} 
\,\,\leq \,\,
\frac 1\lambda \qquad\text{for all } \lambda > 0. 
\end{equation}
Consequently, by the Hille-Yosida Theorem, $\mathbf B \partial_{x}^2$ is the infinitesimal generator of a $C_{0}$-semigroup of contractions on $L^2(\mathbb R,\mathbb C^2)$.
\end{lemma}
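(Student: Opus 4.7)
My plan is to reduce everything to a scalar Fourier multiplier computation by exploiting the complex structure hidden in the matrix $\mathbf{B}$. Under the identification $\mathbb{C}^2\cong\mathbb{C}$ given by $(p,q)\mapsto p+iq$, the matrix $\mathbf{B}=\begin{bmatrix}\beta & -D/2\\ D/2 & \beta\end{bmatrix}$ is nothing but multiplication by the complex scalar $b:=\beta+iD/2$, and $L^2(\mathbb{R},\mathbb{C}^2)$ is unitarily isomorphic to $L^2(\mathbb{R},\mathbb{C})$. So $\mathbf{B}\partial_x^2$ corresponds to the scalar operator $b\,\partial_x^2$ with natural domain $H^2(\mathbb{R},\mathbb{C})$. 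The hypothesis $(D,\beta)\neq(0,0)$ gives $b\neq 0$, and $\beta\geq 0$ gives $\operatorname{Re} b\geq 0$.

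Next, I would conjugate by the Fourier transform $\mathcal{F}$. By Plancherel $\mathcal{F}$ is unitary on $L^2$, and it takes $b\,\partial_x^2$ to the multiplication operator $m(\xi):=-b\xi^2$ on the natural domain $\{\hat u\in L^2:\xi^2\hat u\in L^2\}=\mathcal{F}(H^2)$. Multiplication operators by measurable symbols are automatically closed and densely defined on this natural domain, so $\mathbf{B}\partial_x^2$ is closed with domain exactly $H^2(\mathbb{R},\mathbb{C}^2)$, which is dense in $L^2$. For $\lambda>0$ the symbol of $\lambda-\mathbf{B}\partial_x^2$ is $\lambda-m(\xi)=\lambda+b\xi^2$, whose real part equals $\lambda+\beta\xi^2\geq\lambda>0$; in particular it is bounded away from $0$. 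Hence $(\lambda-m(\xi))^{-1}$ is a bounded measurable symbol, and it defines a bounded inverse, showing $\lambda\in\rho(\mathbf{B}\partial_x^2)$.

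The resolvent bound is now immediate: for $\lambda>0$ and all $\xi\in\mathbb{R}$,
\begin{equation*}
|\lambda+b\xi^2|\;\geq\;\operatorname{Re}(\lambda+b\xi^2)\;=\;\lambda+\beta\xi^2\;\geq\;\lambda,
\end{equation*}
so $|(\lambda+b\xi^2)^{-1}|\leq 1/\lambda$ uniformly in $\xi$. Since $R(\lambda:\mathbf{B}\partial_x^2)$ is the Fourier multiplier with this symbol and $\mathcal{F}$ is an isometry, Plancherel gives $\|R(\lambda:\mathbf{B}\partial_x^2)\|_{\mathcal{B}(L^2)}\leq 1/\lambda$, which is \eqref{eq:HYResBound}. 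A separate check (using that $|\xi^2/(\lambda+b\xi^2)|$ is a bounded symbol since $\lambda+b\xi^2$ grows like $|b|\xi^2$ at infinity with $b\neq 0$) shows $\operatorname{Ran}\,R(\lambda:\mathbf{B}\partial_x^2)\subset H^2$, confirming that the resolvent is well-posed at the level of the stated domain. Finally, having verified closedness, dense domain, and the Hille--Yosida bound on the half-line $\lambda>0$, the Hille--Yosida theorem immediately yields that $\mathbf{B}\partial_x^2$ generates a $C_0$-semigroup of contractions on $L^2(\mathbb{R},\mathbb{C}^2)$.

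I do not expect any real obstacle here: the single slightly nonroutine point is noticing the complex structure that turns the $2\times 2$ matrix problem into a scalar one, after which the Fourier-side estimate is a one-line computation relying only on $\operatorname{Re} b=\beta\geq 0$. The argument makes no use of whether $\beta=0$ or $\beta>0$, which is consistent with the fact that the generator is merely asked to yield a $C_0$-semigroup of contractions, not an analytic semigroup.
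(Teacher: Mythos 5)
Your overall strategy --- pass to the Fourier side and bound the matrix symbol $(-c\,\xi^{2}\mathbf B-\lambda)^{-1}$ uniformly by $1/\lambda$ --- is exactly the paper's strategy; the paper computes $\mathbf C(\xi)^{T}\mathbf C(\xi)=d(\xi)^{-1}\mathbf I$ with $d(\xi)=(4\pi\xi^{2}\beta+\lambda)^{2}+(4\pi\xi^{2}D/2)^{2}=|\lambda+4\pi\xi^{2}(\beta+iD/2)|^{2}$, which is precisely your estimate $|\lambda+b\xi^{2}|\ge\operatorname{Re}(\lambda+b\xi^{2})=\lambda+\beta\xi^{2}\ge\lambda$ in matrix form. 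Your side remarks are also sound: closedness and $\mathbb R^{+}\subset\rho(\mathbf B\partial_{x}^{2})$ follow from the multiplier representation (the paper instead cites Kapitula--Promislow and its earlier spectrum computation), and you are right that only $\beta\ge 0$ is used, consistent with the hypothesis of Theorem~\ref{thm:EvOp}.

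There is, however, one concrete error in your setup. The map $(p,q)\mapsto p+iq$ is \emph{not} a (unitary) isomorphism $\mathbb C^{2}\to\mathbb C$: over $\mathbb C$ it is a surjection with one-dimensional kernel $\operatorname{span}\{(1,i)\}$, so it cannot identify $L^{2}(\mathbb R,\mathbb C^{2})$ with $L^{2}(\mathbb R,\mathbb C)$. That identification is only valid for \emph{real-valued} pairs $(p,q)$, whereas the paper works on the complexified space $L^{2}(\mathbb R,\mathbb C^{2})$ (as it must, since the Fourier transform and the resolvent calculus live there). The correct version of your reduction is to note that $\mathbf B=\beta\mathbf I+(D/2)\mathbf J$ is a normal matrix with orthonormal eigenvectors $\tfrac{1}{\sqrt2}(1,\mp i)$ and eigenvalues $b=\beta+iD/2$ and $\bar b$; conjugating by the corresponding unitary splits $L^{2}(\mathbb R,\mathbb C^{2})$ into two orthogonal copies of $L^{2}(\mathbb R,\mathbb C)$ on which $\mathbf B\partial_{x}^{2}$ acts as $b\,\partial_{x}^{2}$ and $\bar b\,\partial_{x}^{2}$ respectively. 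Since $\operatorname{Re}b=\operatorname{Re}\bar b=\beta\ge 0$, your symbol estimate applies verbatim to both blocks and the norm of the direct sum is the maximum of the two block norms, so the conclusion $\|R(\lambda:\mathbf B\partial_{x}^{2})\|\le 1/\lambda$ survives unchanged. With that one repair the proof is complete and equivalent to the paper's.
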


\begin{proof}
The closedness of the operator $\mathbf B\partial_x^2$ 
is discussed in Remark~\ref{remark:spaces}. 
By Theorem~\ref{thm:EssSpec}, $\sigma(\mathbf B\partial_x^2) 
= \{ s(\beta \pm iD/2) \, | \, s\leq 0\}$, and so $\mathbb R^+ \subset 
\rho(\mathbf B\partial_x^2)$. 
To establish the bound~\eqref{eq:HYResBound} on the norm of the resolvent, we
use the Fourier transform, 
$\mathcal F: L^2(\mathbb R,\mathbb C^2) \to L^2(\mathbb R,\mathbb C^2)$, defined by
\begin{equation}
\widehat f(\xi)\,\,=\,\,{\mathcal F}[f](\xi)\,\,=\,\, \int_{\mathbb R} f(x) \exp(-2\pi i x\xi)\,dx.
\end{equation}
Now, for $\mathbf v \in L^2(\mathbb R,\mathbb C^2)$
\begin{equation}
\mathcal F[R(\lambda: \mathbf B \partial_x^2)\mathbf v] (\xi)
\,\,=\,\, \mathbf C(\xi) \widehat{\mathbf v}(\xi),
\end{equation}
where
\begin{equation}
\mathbf C(\xi)  \,\,=\,\, ( -4\pi\xi^2 \mathbf B - \lambda)^{-1}
\,\,=\,\, \frac {1}{d(\xi)} 
\begin{bmatrix}
-4\pi\xi^2\beta - \lambda & -4\pi \xi^2 D/2 \\
4\pi \xi^2 D/2 & -4\pi\xi^2\beta - \lambda
\end{bmatrix},
\end{equation}
with $d(\xi) = (4\pi \xi^2\beta + \lambda)^2 + (4\pi\xi^2 D/2)^2$.
By Parseval's Theorem,
\begin{align*}
\| R(\lambda:\mathbf B \partial_x^2)\mathbf v\|_{L^2(\mathbb R,\mathbb C^2)}^2
\,\,&=\,\, \| \mathbf C\widehat{\mathbf v}\|_{L^2(\mathbb R,\mathbb C^2)}^2
\\
\,\,&\leq\,\, \int_{\mathbb R} \| \mathbf C(\xi)\|_{\mathbb C^{2 \times 2}}^2
\| \widehat{\mathbf v}(\xi)\|_{\mathbb C^2}^2\, d\xi
\,\,\leq\,\,
\| \mathbf C \|_\infty^2  \|    {\mathbf v}\|_{L^2(\mathbb R,\mathbb C^2)}^2,
\end{align*}
where $\| \mathbf C \|_\infty := \sup\limits_{\xi\in\mathbb R} \| \mathbf C(\xi)\|_{\mathbb C^{2\times 2}}$.
Since $\| \mathbf C(\xi)\|_{\mathbb C^{2\times 2}}^2$ is equal to the largest
eigenvalue of $\mathbf C(\xi)^T \mathbf C(\xi) = d(\xi)^{-1}\mathbf I_{2\times 2}$,
we conclude that 
$\| \mathbf C(\xi)\|_{\mathbb C^{2\times 2}} = d(\xi)^{-1/2} \leq \lambda^{-1}$,
for all $\lambda > 0$.
Since $\beta>0$, we conclude that 
$\| R(\lambda: \mathbf B \partial_{x}^2)\|_{\mathcal B(L^2(\mathbb R,\mathbb C^2))} 
\,\,\leq \,\,\lambda^{-1}$, as required. 
\qed
\end{proof}

\begin{lemma}\label{lemma:Mbound}
Assume that Hypothesis~\ref{hyp:PsiEvOp} is met. Then, for all $t>0$,
\begin{equation}
\| \widetilde{\mathbf M}(t) \|_{\mathcal B(L^2(\mathbb R,\mathbb C^2))} 
\,\,\leq \,\, \| \widetilde{\mathbf M} \|_\infty \,\, < \,\, \infty.
\end{equation}
\end{lemma}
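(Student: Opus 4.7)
The plan is to establish the two independent assertions of the lemma in turn: first, that $\|\widetilde{\mathbf M}\|_\infty < \infty$, and second, that this pointwise bound transfers to a bound on the operator norm of multiplication by $\widetilde{\mathbf M}(t)$ on $L^2(\mathbb R,\mathbb C^2)$.

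For the finiteness of $\|\widetilde{\mathbf M}\|_\infty$, I would argue directly from the formula \eqref{eq:OpM}. The matrices $\mathbf N_0$, $\mathbf N_1$, $\mathbf N_2$ are constant and hence have finite Euclidean-induced norms, while $\widetilde{\mathbf M}(t,x)$ depends on $(t,x)$ only through the two real components of $\boldsymbol\psi(t,x)$, in a polynomial fashion (degree at most four in $|\boldsymbol\psi|$). By Hypothesis~\ref{hyp:PsiEvOp}, $\boldsymbol\psi$ is bounded on $[0,T]\times \mathbb R$; by the $T$-periodicity of $\boldsymbol\psi$ this bound persists on $[0,\infty)\times\mathbb R$. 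Submultiplicativity of the matrix norm together with the triangle inequality then yields a uniform pointwise bound on $\|\widetilde{\mathbf M}(t,x)\|_{\mathbb C^{2\times 2}}$, so $\|\widetilde{\mathbf M}\|_\infty < \infty$.

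For the second step, I would use the standard pointwise estimate for multiplication operators. For any fixed $t\geq 0$ and any $\mathbf u \in L^2(\mathbb R,\mathbb C^2)$,
\begin{align*}
\|\widetilde{\mathbf M}(t)\mathbf u\|^2_{L^2(\mathbb R,\mathbb C^2)}
&= \int_{\mathbb R} \|\widetilde{\mathbf M}(t,x)\mathbf u(x)\|^2_{\mathbb C^2}\, dx \\
&\leq \int_{\mathbb R} \|\widetilde{\mathbf M}(t,x)\|^2_{\mathbb C^{2\times 2}} \|\mathbf u(x)\|^2_{\mathbb C^2}\, dx \\
&\leq \|\widetilde{\mathbf M}\|^2_\infty \,\|\mathbf u\|^2_{L^2(\mathbb R,\mathbb C^2)},
\end{align*}
where the first inequality is the defining property of the induced matrix norm and the second uses the definition of $\|\widetilde{\mathbf M}\|_\infty$. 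Taking the supremum over $\mathbf u$ with $\|\mathbf u\|_{L^2} = 1$ then gives the claimed operator-norm bound.

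There is no genuine obstacle here; the lemma is a bookkeeping statement whose role is to supply the bounded perturbation $\widetilde{\mathbf M}(t)$ of $\mathbf B\,\partial_x^2$ required by the evolution-family framework invoked in the proof of Theorem~\ref{thm:EvOp}. The only minor subtlety worth flagging is that Hypothesis~\ref{hyp:PsiEvOp} literally gives boundedness of $\boldsymbol\psi$ only on $[0,T]\times\mathbb R$, so one must invoke $T$-periodicity to extend to all $t>0$.
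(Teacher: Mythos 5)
Your proof is correct and follows essentially the same route as the paper's: the same pointwise multiplication-operator estimate for the $L^2$ bound, with the finiteness of $\|\widetilde{\mathbf M}\|_\infty$ deduced from the boundedness of $\boldsymbol\psi$ in Hypothesis~\ref{hyp:PsiEvOp} (the paper bounds the matrix $2$-norm by the Frobenius norm where you use submultiplicativity and the triangle inequality, an immaterial difference). Your remark about invoking $T$-periodicity to pass from $[0,T]\times\mathbb R$ to all $t>0$ is a reasonable point of care that the paper leaves implicit.
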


\begin{proof}
Let $\mathbf u\in L^2(\mathbb R,\mathbb C^2)$. Then,
\begin{align*}
\| \widetilde{\mathbf M}(t) \mathbf u \|_{L^2(\mathbb R,\mathbb C^2)}^2
\,\,\leq \,\, \int_{\mathbb R} \| \widetilde{\mathbf M}(t,x)\|_{\mathbb C^{2\times 2}}^2
\|\mathbf u(x)\|^2_{\mathbb C^2}\, dx
\,\,\leq\,\, \| \widetilde{\mathbf M}\|^2_\infty\, \| \mathbf u\|^2_{L^2(\mathbb R,\mathbb C^2)}.
\end{align*}
To show that $\| \widetilde{\mathbf M}\|_\infty < \infty$ we use 
Hypothesis~\ref{hyp:PsiEvOp} together with the fact that 
the matrix 2-norm is bounded by the Frobenius norm, 
$\| \mathbf A \|_{\mathbb C^{2\times 2}} \leq \| \mathbf A \|_F$, where 
$\| \mathbf A \|_F^2:=\sum\limits_{i,j=1}^2 |A_{ij}|^2$. 
\qed
\end{proof}

Taken together, Theorem 2.3 of \cite[Ch. 5]{pazy2012semigroups}  and
 Lemmas~\ref{lemma:HY} and \ref{lemma:Mbound} imply that 
 $\{\mathcal L(t)\}_{t\geq 0}$ is a stable family of infinitesimal generators
 of   $C_0$-semigroups on $L^2(\mathbb R,\mathbb C^2)$,
 which is one of the assumptions required
 for the application of Theorem~4.8 of Pazy~\cite[Ch. 5]{pazy2012semigroups}.
  The remaining assumption in that theorem 
 also holds thanks to the following lemma. 

\begin{lemma}\label{lemma:LisC1}
Assume that Hypothesis~\ref{hyp:PsiEvOp} is met. Then,
for each $\mathbf v\in H^2(\mathbb R,\mathbb C^2)$, we have that
$F(\cdot) := \mathcal L(\cdot)\mathbf v: (0,\infty)\to L^2(\mathbb R,\mathbb C^2)$
is $C^1$. 
\end{lemma}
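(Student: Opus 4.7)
The plan is to reduce everything to a statement about $\widetilde{\mathbf{M}}(t)\mathbf{v}$. Since $\mathbf{B}\partial_x^2 \mathbf{v}$ does not depend on $t$, the claim is equivalent to showing that the map $t\mapsto \widetilde{\mathbf M}(t)\mathbf v$ from $(0,\infty)$ to $L^2(\mathbb R,\mathbb C^2)$ is $C^1$, with derivative equal to $(\partial_t \widetilde{\mathbf M}(t))\mathbf v$. From formula \eqref{eq:OpM}, the matrix $\widetilde{\mathbf M}(t,x)$ is a polynomial in the components of $\boldsymbol\psi(t,x)$, so the chain rule gives, pointwise in $(t,x)$,
\begin{equation*}
\partial_t \widetilde{\mathbf M}(t,x) \,\,=\,\, \mathbf P\bigl(\boldsymbol\psi(t,x),\,\boldsymbol\psi_t(t,x)\bigr),
\end{equation*}
where $\mathbf P$ is an explicit polynomial expression that is linear in $\boldsymbol\psi_t$ and of degree at most five in $\boldsymbol\psi$.

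First I would use Hypothesis~\ref{hyp:PsiEvOp} together with the $T$-periodicity of $\boldsymbol\psi$ to conclude that $\boldsymbol\psi$ and $\boldsymbol\psi_t$ are bounded on $[0,\infty)\times\mathbb R$ and continuous in $t$ uniformly in $x$. Composing with the polynomial $\mathbf P$ (continuous on the bounded range of its arguments) then yields $\|\partial_t \widetilde{\mathbf M}\|_\infty<\infty$ and, crucially, that $t\mapsto \partial_t\widetilde{\mathbf M}(t,\cdot)$ is continuous from $[0,\infty)$ to $L^\infty(\mathbb R,\mathbb C^{2\times 2})$. By the same reasoning as in Lemma~\ref{lemma:Mbound}, this means $\partial_t \widetilde{\mathbf M}(t)\in \mathcal B(L^2(\mathbb R,\mathbb C^2))$ with norm bounded by $\|\partial_t\widetilde{\mathbf M}\|_\infty$.

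Next I would establish differentiability of $F$ by a standard difference-quotient argument. For $t_0>0$ and $h$ small, the pointwise fundamental theorem of calculus gives
\begin{equation*}
\frac{\widetilde{\mathbf M}(t_0+h,x)-\widetilde{\mathbf M}(t_0,x)}{h}-\partial_t\widetilde{\mathbf M}(t_0,x)
\,\,=\,\, \frac{1}{h}\int_0^h \bigl[\partial_t\widetilde{\mathbf M}(t_0+s,x)-\partial_t\widetilde{\mathbf M}(t_0,x)\bigr]\,ds.
\end{equation*}
Multiplying by $\mathbf v(x)$, taking the $L^2$-norm, applying Minkowski's integral inequality, and bounding the integrand by $\|\partial_t\widetilde{\mathbf M}(t_0+s)-\partial_t\widetilde{\mathbf M}(t_0)\|_\infty\|\mathbf v\|_{L^2}$, one concludes that the difference quotient converges to $\partial_t\widetilde{\mathbf M}(t_0)\mathbf v$ in $L^2$, using the uniform-in-$x$ continuity of $\partial_t\widetilde{\mathbf M}$ in $t$. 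Continuity of $F'$ follows from the same sup-norm control: $\|F'(t+h)-F'(t)\|_{L^2}\le \|\partial_t\widetilde{\mathbf M}(t+h)-\partial_t\widetilde{\mathbf M}(t)\|_\infty\|\mathbf v\|_{L^2}\to 0$.

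I do not expect a serious obstacle here; the only delicate point is making precise the reading of ``continuous on $[0,T]\times\mathbb R$, uniformly in $x$'' in Hypothesis~\ref{hyp:PsiEvOp} so that it transfers, via the polynomial composition $\mathbf P$, to a uniform-in-$x$ modulus of continuity for $\partial_t\widetilde{\mathbf M}(t,x)$ in $t$. Once that is in hand, the $L^2$ estimates needed both for differentiability and for continuity of the derivative reduce to pulling an $L^\infty$-in-$x$ matrix norm outside the integral and applying it to the fixed $L^2$ function $\mathbf v$.
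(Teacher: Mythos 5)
Your proposal is correct and follows essentially the same route as the paper: both reduce the claim to the multiplication part $\widetilde{\mathbf M}(t)\mathbf v$, identify $F'(t)=\partial_t\widetilde{\mathbf M}(t)\mathbf v$, and control the difference quotient by writing the increment via the fundamental theorem of calculus and pulling out a uniform-in-$x$ sup-norm of $\partial_t\widetilde{\mathbf M}(\tau,\cdot)-\partial_t\widetilde{\mathbf M}(t,\cdot)$ against the fixed $L^2$ function $\mathbf v$. Your explicit remark that Hypothesis~\ref{hyp:PsiEvOp} transfers to $\partial_t\widetilde{\mathbf M}$ through the polynomial dependence of $\widetilde{\mathbf M}$ on $\boldsymbol\psi$ makes precise a step the paper leaves implicit, but it is the same argument.
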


\begin{proof}
We show that $F$ is differentiable with $F'(t) = \partial_t\widetilde{\mathbf M}(t)\mathbf v$.
The proof that $F'$ is continuous is similar. 
First we observe that since $\boldsymbol\psi$ and $\boldsymbol\psi_t$ are assumed to be
bounded, $ \mathcal L(t)\mathbf v$ and $\partial_t\widetilde{\mathbf M}(t)\mathbf v$
are in $L^2(\mathbb R,\mathbb C^2)$. Next, we apply the fundamental theorem
of calculus and use the equivalences between the matrix 2-norm and the Frobenius norm
to obtain the estimate
\begin{align*}
&\| F(t+h) - F(t) - h F'(t) \|_{L^2(\mathbb R,\mathbb C^2)}^2 \\
&\leq\,\,
\int_{\mathbb R} \| \widetilde{\mathbf M}(t+h,x) - \widetilde{\mathbf M}(t,x) 
- h \partial_t\widetilde{\mathbf M}(t,x) \|_{\mathbb C^{2\times 2}}^2 
\|\mathbf v(x)\|_{\mathbb C^2}^2\, dx
\\
&=\,\, \int_{\mathbb R} \left\| \int_t^{t+h} \left[ 
\partial_t \widetilde{\mathbf M}(\tau,x) - \partial_t \widetilde{\mathbf M}(t,x)\right]
\, d\tau \right\|_{\mathbb C^{2\times 2}}^2
\|\mathbf v(x)\|_{\mathbb C^2}^2\, dx
\\
&\leq\,\, h^2 \sum_{i,j=1}^2 \sup\limits_{(\tau,x)\in (t,t+h)\times \mathbb R}  
\left( \left| \partial_t \widetilde{\mathbf M}_{i,j}(\tau,x) - 
\partial_t \widetilde{\mathbf M}_{i,j}(t,x) 
\right|^2\right) 
\, \| \mathbf v\|_{L^2(\mathbb R,\mathbb C^2)}
\\
&\leq\,\, 8h^2  \sup\limits_{(\tau,x)\in (t,t+h)\times \mathbb R}  
\left\| \partial_t \widetilde{\mathbf M}(\tau,x) - 
\partial_t \widetilde{\mathbf M}(t,x) 
\right\|_{\mathbb C^{2\times 2}}^2
\, \| \mathbf v\|_{L^2(\mathbb R,\mathbb C^2)}.
\end{align*}
The result now follows, since by Hypothesis~\ref{hyp:PsiEvOp},
$\partial_t \widetilde{\mathbf M}$ is continuous in $t$, uniformly in $x$. 
\qed
\end{proof}   
\qed
\end{proof}

\section{The essential spectrum of the monodromy operator [Case: $\beta>0$]}\label{Sec:EssSpecM}

In this section, we  define the monodromy operator that is the main focus
of this paper, and derive a formula for its essential spectrum  
in terms of that of the asymptotic operator, $\mathcal L_\infty$. 
Our proof relies on the fact 
that the semigroup, $e^{t \mathcal L_\infty}$, associated with 
$\mathcal L_\infty$ is analytic, which only holds when $\beta>0$. Therefore,
although the results
in this section apply to the CQ-CGL breathers
 in  Example~\ref{ex:GL}, 
they do not apply to the KM breather in Example~\ref{ex:KM}. 

\begin{definition}
Let $\psi$ be a periodically-stationary
solution of the constant coefficient CQ-CGL equation~\eqref{eq:NWE}
and let $s\in\mathbb R$.
The \emph{monodromy operator}, $\mathcal M(s)$, associated with 
the linearization \eqref{eq:Linearization} of \eqref{eq:NWE} about 
$\psi$ is the bounded operator on $L^2(\mathbb R,\mathbb C^2)$
defined by $\mathcal M(s)=\mathcal U(s+T,s)$, where 
$\mathcal U$ is the evolution operator of Theorem~\ref{thm:EvOp}
and $T$ is the period of $\psi$. 
\end{definition}

\begin{theorem}\label{thm:EssSpecU}
Suppose that $\beta > 0$ and that
Hypotheses~\ref{hyp:EA} and \ref{hyp:PsiEvOp} are met. 
Then the $C^0$-semigroup, $e^{t\mathcal L_\infty}$,  generated by
$\mathcal L_\infty$ is analytic. 
Furthermore,
the essential spectrum of the evolution operator, $\mathcal U(t,s)$, is given by
\begin{equation}
\sigma_{\rm{ess}}(\mathcal U(t,s)) \,\,=\,\, \sigma_{\rm{ess}}(e^{(t-s)\mathcal L_\infty}).
\end{equation}
Therefore, the essential spectrum of the monodromy operator, $\mathcal M(s)$, is given by
\begin{equation}
\sigma_{\rm{ess}}(\mathcal M(s)) \,\,=\,\, \sigma_{\rm{ess}}(e^{T\mathcal L_\infty}),
\end{equation}
which is independent of $s$. 
\end{theorem}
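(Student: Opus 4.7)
The plan is to proceed in three stages: establish analyticity of $e^{t\mathcal L_\infty}$, show that $\mathcal U(t,s) - e^{(t-s)\mathcal L_\infty}$ is compact on $L^2(\mathbb R,\mathbb C^2)$, and then conclude by Weyl's theorem.

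For analyticity, I observe that $\mathbf B$ has eigenvalues $\beta \pm iD/2$, both with real part $\beta > 0$. On the Fourier side, the spectrum of $\mathbf B \partial_x^2$ consists of the two rays $\{s(\beta \pm iD/2) : s \leq 0\}$, which lie strictly to the left of a sector about the negative real axis. Refining the Fourier-side resolvent computation of Lemma~\ref{lemma:HY} to complex $\lambda$ gives $\|R(\lambda;\mathbf B \partial_x^2)\|_{\mathcal B(L^2)} \leq M/|\lambda|$ throughout a full sector $|\arg(\lambda)| < \pi/2 + \theta$ for some $\theta=\theta(\beta,D) > 0$, so $\mathbf B \partial_x^2$ is sectorial and generates an analytic semigroup; the bounded additive perturbation $\mathbf M_\infty$ preserves analyticity.

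For the compactness step, I would use the variation-of-constants formula (valid by Theorem~\ref{thm:EvOp}),
\begin{equation*}
\mathcal U(t,s) \,=\, e^{(t-s)\mathcal L_\infty} + \int_s^t e^{(t-\tau)\mathcal L_\infty}\,\mathbf M(\tau)\,\mathcal U(\tau,s)\,d\tau,
\end{equation*}
where $\mathbf M(\tau) = \widetilde{\mathbf M}(\tau) - \mathbf M_\infty$ decays exponentially in $x$, and show that the remainder integral $\mathcal R(t,s)$ is compact. Fixing $\lambda \in \rho(\mathcal L_\infty)$ and factoring
\begin{equation*}
e^{(t-\tau)\mathcal L_\infty}\mathbf M(\tau) \,=\, \bigl[\,e^{(t-\tau)\mathcal L_\infty}(\mathcal L_\infty - \lambda)\,\bigr]\circ\bigl[\,(\mathcal L_\infty - \lambda)^{-1}\mathbf M(\tau)\,\bigr],
\end{equation*}
the second bracket is compact by a mirror image of the Kolmogorov–Riesz argument in Theorem~\ref{thm:EssSpec} (or by taking the adjoint of that theorem, since compactness is preserved under adjoints), and analyticity supplies the smoothing bound $\|e^{(t-\tau)\mathcal L_\infty}(\mathcal L_\infty - \lambda)\|_{\mathcal B(L^2)} \leq C/(t-\tau)$. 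The integrand is thus compact-operator-valued and operator-norm continuous on $[s,t)$. The $(t-\tau)^{-1}$ singularity at $\tau = t$ is handled either via fractional-power variants $\|e^{(t-\tau)\mathcal L_\infty}(\mathcal L_\infty - \lambda)^{\alpha}\| \leq C(t-\tau)^{-\alpha}$ for $\alpha < 1$ (yielding an integrable norm bound), or by approximating $\mathcal R(t,s)$ in operator norm by the compact truncations $\int_s^{t-\epsilon}\cdots\,d\tau$, using $\|\mathbf M(\tau)\mathcal U(\tau,s)\|_{\mathcal B(L^2)}$ to control the tail.

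Once $\mathcal U(t,s) - e^{(t-s)\mathcal L_\infty}$ is compact, Weyl's theorem (applied as in the final remark of Section~\ref{Sec:EssSpecL}) gives $\sigma_{\rm ess}(\mathcal U(t,s)) = \sigma_{\rm ess}(e^{(t-s)\mathcal L_\infty})$, and setting $t = s+T$ yields the formula for $\mathcal M(s)$; $s$-independence is automatic since the right-hand side depends only on the gap $T$. The main obstacle is the compactness step: pointwise compactness of the integrand must be upgraded to compactness of the Bochner integral, which requires both operator-norm continuity in $\tau$ and a controlled singularity at $\tau = t$. Analyticity of the unperturbed semigroup is precisely what delivers both ingredients, and its failure when $\beta = 0$ is exactly why the argument does not extend to the KM breather setting of Example~\ref{ex:KM}.
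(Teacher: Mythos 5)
Your proposal follows essentially the same route as the paper: the same variation-of-constants decomposition, analyticity of $e^{t\mathcal L_\infty}$ via a Fourier-side resolvent estimate for $\mathbf B\partial_x^2$ plus bounded perturbation, compactness of the integrand by factoring through $(\mathcal L_\infty-\lambda)^{\pm 1}$ together with the adjoint of Theorem~\ref{thm:EssSpec}, and Weyl's theorem to conclude. (The paper applies the factorization to the adjoint $\mathcal K^*(\tau)=\mathcal U^*(\tau,s)\,\mathbf M^*(\tau)(\mathcal L_\infty^*-\lambda)^{-1}(\mathcal L_\infty^*-\lambda)e^{(t-\tau)\mathcal L_\infty^*}$ rather than to $\mathcal K(\tau)$ itself, but that is a mirror image of your step, and you correctly identify that compactness of $(\mathcal L_\infty-\lambda)^{-1}\mathbf M(\tau)$ comes from the adjoint version of Theorem~\ref{thm:EssSpec} via Schauder.)

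The one genuine soft spot is your passage from pointwise compactness of the integrand to compactness of the integral. You assert that the integrand is operator-norm continuous on $[s,t)$, but the factor $\mathcal U(\tau,s)$ is only guaranteed to be \emph{strongly} continuous by Theorem~\ref{thm:EvOp}; norm continuity certainly fails at $\tau=s$ (where $\mathcal U(\tau,s)\to I$ only strongly), and for $\tau>s$ it would require a separate parabolic-smoothing argument that neither you nor the paper supplies. Your truncation device removes the endpoints but does not by itself make the truncated integrals compact. The clean repair --- and what the paper does --- is to note that norm continuity is not needed: by Engel--Nagel Theorem C.7, the strong integral of a uniformly bounded, strongly continuous, compact-operator-valued function is compact. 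Relatedly, the $(t-\tau)^{-1}$ singularity you set out to control is an artifact of bounding the two factors of $e^{(t-\tau)\mathcal L_\infty}\mathbf M(\tau)\mathcal U(\tau,s)$ separately; the unfactored integrand is uniformly bounded in $\mathcal B(L^2(\mathbb R,\mathbb C^2))$ by Theorem~\ref{thm:EvOp} and Lemma~\ref{lemma:Mbound}, so no fractional powers or tail estimates are required --- the factorization is needed only to certify compactness at each fixed $\tau\in(s,t)$.
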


Before proving this theorem, we state and prove a Corollary.

\begin{corollary}\label{cor:EssSpecM}
Suppose that $\beta > 0$ and Hypothesis~\ref{hyp:PsiEvOp} is met.
Then, the essential spectrum of the monodromy operator, $\mathcal M(s)$, is given by
\begin{equation}
\sigma_{\rm{ess}}(\mathcal M(s)) \setminus \{0\} \,\,=\,\, e^{T\sigma_{\rm{ess}}(\mathcal L_\infty)}.
\end{equation}
In particular, in the case of a CQ-CGL breather, if $\delta < 0$ then
the  essential spectrum lies inside a circle of radius $e^{\delta T} < 1$.
\end{corollary}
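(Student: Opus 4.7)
The plan is to combine Theorem~\ref{thm:EssSpecU}---which gives $\sigma_{\rm{ess}}(\mathcal M(s)) = \sigma_{\rm{ess}}(e^{T\mathcal L_\infty})$ and asserts that $\{e^{t\mathcal L_\infty}\}_{t\geq 0}$ is an analytic semigroup---with the spectral mapping theorem for the essential spectrum of eventually norm-continuous semigroups. The key identity I would invoke is that for a closed operator $A$ generating such a semigroup on a Banach space,
\begin{equation*}
\sigma_{\rm{ess}}(e^{tA}) \setminus \{0\} \,\,=\,\, e^{t\,\sigma_{\rm{ess}}(A)} \qquad \text{for all } t>0.
\end{equation*}
This is the essential-spectrum version of the classical spectral mapping theorem; it is valid for our Fredholm-index-zero definition of $\sigma_{\rm{ess}}$ (Definition~\ref{SpecDefs}) precisely because that essential spectrum is invariant under compact perturbations, as highlighted in the remark following the definition. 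Analytic semigroups are immediately norm continuous, so the hypothesis is satisfied. Applying the identity with $A=\mathcal L_\infty$ and $t=T$, and chaining with Theorem~\ref{thm:EssSpecU}, would yield the first claim.

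For the concrete CQ-CGL statement, I would substitute the explicit description from Example~\ref{ex:GL}:
\begin{equation*}
\sigma_{\rm{ess}}(\mathcal L_\infty) \,\,=\,\, \{ \delta - \mu^2(\beta \pm iD/2) : \mu\in\mathbb R \}.
\end{equation*}
Since $\beta>0$, every $\lambda$ in this set satisfies $\Re(\lambda) = \delta - \mu^2\beta \leq \delta$. If additionally $\delta<0$, then $|e^{T\lambda}| = e^{T\Re(\lambda)} \leq e^{T\delta} < 1$, which by the first part of the corollary places $\sigma_{\rm{ess}}(\mathcal M(s))\setminus\{0\}$ inside the closed disk of radius $e^{T\delta}$ centered at the origin.

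The main obstacle is the essential spectral mapping identity itself: for general $C_0$-semigroups the classical spectral mapping theorem fails, with only one inclusion surviving, and the analyticity of $e^{t\mathcal L_\infty}$---which requires $\beta>0$ and fails for the KM breather---is what upgrades this to an equality and transfers it to the Fredholm-type essential spectrum. Providing a clean citation (e.g.\ from Engel--Nagel), or in lieu of that a short self-contained argument via analytic Fredholm theory applied to the resolvent on its sector of analyticity together with the invariance of $\sigma_{\rm{ess}}$ under compact perturbations, is the step that requires the most care.
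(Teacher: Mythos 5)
There is a genuine gap here: your entire argument rests on the identity $\sigma_{\rm ess}(e^{tA})\setminus\{0\}=e^{t\,\sigma_{\rm ess}(A)}$ for analytic (or eventually norm-continuous) semigroups, which you assert as a known ``essential-spectrum version'' of the spectral mapping theorem but neither prove nor pin down with a citation. For the Fredholm-index-zero essential spectrum of Definition~\ref{SpecDefs} this is not a standard quotable result: Engel--Nagel prove the spectral mapping theorem for the full spectrum of eventually norm-continuous semigroups and, separately, for the point spectrum of arbitrary $C_0$-semigroups, but not for $\sigma_{\rm ess}$. Invariance of $\sigma_{\rm ess}$ under compact perturbations does not convert either of those into a mapping theorem for the essential spectrum, since $e^{tA}-$(anything spectrally tied to $A$) is not a compact perturbation in any obvious sense. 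The sketch you defer to the last sentence (analytic Fredholm theory on the sector of analyticity) is precisely where all the content of the corollary lives, so as written the proof is incomplete at its central step.

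The paper closes exactly this gap without ever invoking an essential spectral mapping theorem, by exploiting a special feature of $\mathcal L_\infty$. Proposition~\ref{prop:EssSpecLinfty} gives $\sigma_{\rm pt}(\mathcal L_\infty)=\emptyset$, hence $\sigma_{\rm ess}(\mathcal L_\infty)=\sigma(\mathcal L_\infty)$. The point spectral mapping theorem, valid for every $C_0$-semigroup, then yields $\sigma_{\rm pt}(e^{T\mathcal L_\infty})\setminus\{0\}=e^{T\sigma_{\rm pt}(\mathcal L_\infty)}=\emptyset$, so $\sigma^{\mathcal F}_{\rm pt}(e^{T\mathcal L_\infty})\setminus\{0\}=\emptyset$ and therefore $\sigma_{\rm ess}(e^{T\mathcal L_\infty})\setminus\{0\}=\sigma(e^{T\mathcal L_\infty})\setminus\{0\}$. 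Finally, the ordinary spectral mapping theorem for analytic semigroups identifies the latter with $e^{T\sigma(\mathcal L_\infty)}=e^{T\sigma_{\rm ess}(\mathcal L_\infty)}$. Replacing your unproven identity with this short reduction (which also makes visible that the corollary implicitly uses Hypothesis~\ref{hyp:EA}, needed for Proposition~\ref{prop:EssSpecLinfty}) repairs the argument; the remainder of your write-up, including the estimate $\Re\lambda=\delta-\mu^2\beta\leq\delta$ giving the radius $e^{\delta T}<1$, is correct and matches the intended conclusion.
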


\begin{proof}

Since the spectral mapping theorem holds for the point spectrum 
of a $C_0$-semigroup~\cite{engelnagel2000},
$\sigma_{\rm{pt}}(e^{T\mathcal L_\infty}) \setminus \{ 0\} = e^{T \sigma_{\rm{pt}}
(\mathcal L_\infty)} = \emptyset $, by Theorem~\ref{thm:EssSpec}. 
Since $\sigma^{\mathcal F}_{\rm{pt}} \subset 
\sigma_{\rm{pt}}$, 
$\sigma^{\mathcal F}_{\rm{pt}}(e^{T\mathcal L_\infty}) \setminus \{0\}= \emptyset$. Consequently, 
$\sigma_{\rm{ess}}(e^{T\mathcal L_\infty}) \setminus \{ 0\} = 
\sigma(e^{T\mathcal L_\infty}) \setminus \{ 0\} = 
e^{T \sigma (\mathcal L_\infty)}$, since the 
spectral mapping theorem also holds for the spectrum of an
analytic semigroup~\cite{engelnagel2000}. The result now 
follows since  $\sigma (\mathcal L_\infty) =  \sigma_{\rm{ess}} (\mathcal L_\infty)$.
\qed
\end{proof}

The proof of Theorem~\ref{thm:EssSpecU} relies on the following two lemmas.

\begin{lemma}
Suppose that Hypothesis~\ref{hyp:PsiEvOp} is met. Then
\begin{equation}
\mathcal U(t,s) \,\,=\,\, e^{(t-s)\mathcal L_\infty} \,\,+\,\, \int_s^t 
e^{(t-\tau)\mathcal L_\infty} \,\mathbf M(\tau) \, \mathcal U(\tau,s)\, d\tau,
\qquad\text{in } \mathcal B(L^2(\mathbb R,\mathbb C^2)).
\label{eq:VarOfParU}
\end{equation}
\end{lemma}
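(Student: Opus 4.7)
The plan is to derive the formula by a Duhamel/variation-of-parameters argument, applied first on the dense subspace $H^2(\mathbb R,\mathbb C^2)$ and then extended by density to $L^2(\mathbb R,\mathbb C^2)$.

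Fix $0\le s\le t$ and $\mathbf v\in H^2(\mathbb R,\mathbb C^2)$. Let $\mathbf u(\tau):=\mathcal U(\tau,s)\mathbf v$ for $\tau\in[s,t]$. By Theorem~\ref{thm:EvOp}, $\mathbf u\in C([s,t],H^2(\mathbb R,\mathbb C^2))\cap C^1((s,t],L^2(\mathbb R,\mathbb C^2))$ and $\partial_\tau \mathbf u(\tau)=\mathcal L(\tau)\mathbf u(\tau)=\mathcal L_\infty\mathbf u(\tau)+\mathbf M(\tau)\mathbf u(\tau)$ for $\tau\in(s,t]$. The key idea is to consider the $L^2$-valued function
\begin{equation*}
\mathbf w(\tau)\,\,:=\,\,e^{(t-\tau)\mathcal L_\infty}\mathbf u(\tau),\qquad \tau\in[s,t].
\end{equation*}
Because $\mathbf u(\tau)\in D(\mathcal L_\infty)=H^2(\mathbb R,\mathbb C^2)$ for every $\tau$, the two standard facts that $\tau\mapsto e^{(t-\tau)\mathcal L_\infty}\mathbf y$ is differentiable with derivative $-\mathcal L_\infty e^{(t-\tau)\mathcal L_\infty}\mathbf y=-e^{(t-\tau)\mathcal L_\infty}\mathcal L_\infty\mathbf y$ for $\mathbf y\in D(\mathcal L_\infty)$, combined with the differentiability of $\mathbf u$, give the Banach-space product rule
\begin{equation*}
\frac{d\mathbf w}{d\tau}\,\,=\,\,-e^{(t-\tau)\mathcal L_\infty}\mathcal L_\infty\mathbf u(\tau)+e^{(t-\tau)\mathcal L_\infty}\partial_\tau\mathbf u(\tau)\,\,=\,\,e^{(t-\tau)\mathcal L_\infty}\mathbf M(\tau)\mathbf u(\tau).
\end{equation*}

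Next I would verify that the right-hand side is continuous in $\tau$ as an $L^2$-valued map: strong continuity of the semigroup, boundedness of $\mathbf M(\tau)$ on $L^2$ (which follows from Hypothesis~\ref{hyp:EA}, since $\widetilde{\mathbf M}$ and $\mathbf M_\infty$ are bounded), and continuity of $\mathbf u$ in $L^2$ yield the desired continuity. Integrating from $s$ to $t$ then gives
\begin{equation*}
\mathbf w(t)-\mathbf w(s)\,\,=\,\,\int_s^t e^{(t-\tau)\mathcal L_\infty}\mathbf M(\tau)\mathbf u(\tau)\,d\tau,
\end{equation*}
that is,
\begin{equation*}
\mathcal U(t,s)\mathbf v-e^{(t-s)\mathcal L_\infty}\mathbf v\,\,=\,\,\int_s^t e^{(t-\tau)\mathcal L_\infty}\mathbf M(\tau)\mathcal U(\tau,s)\mathbf v\,d\tau.
\end{equation*}

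Finally, to upgrade this identity from $\mathbf v\in H^2(\mathbb R,\mathbb C^2)$ to all $\mathbf v\in L^2(\mathbb R,\mathbb C^2)$, I would observe that both sides are bounded operators in $\mathbf v$: the left-hand side by Theorem~\ref{thm:EvOp}(1) together with analyticity of $e^{t\mathcal L_\infty}$, and the right-hand side because the integrand is uniformly bounded in operator norm on $[s,t]$ by the product of $\|e^{(t-\tau)\mathcal L_\infty}\|$, $\|\mathbf M\|_\infty$, and Theorem~\ref{thm:EvOp}(1), hence is Bochner-integrable as a strongly continuous operator-valued map applied to $\mathbf v$. Density of $H^2$ in $L^2$ then yields the equality in $\mathcal B(L^2(\mathbb R,\mathbb C^2))$.

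I expect the main obstacle to be rigorously justifying the product rule for $\mathbf w$: one has to balance the fact that $\partial_\tau\mathbf u$ is only guaranteed to exist for $\tau>s$ (not at the left endpoint) against the need for absolute continuity of $\mathbf w$ on $[s,t]$. This is handled by deriving the identity first on $[s+\epsilon,t]$ and letting $\epsilon\downarrow 0$, using continuity of $\mathbf u$ in $L^2$ at $\tau=s$ and dominated convergence for the $L^2$-valued integral.
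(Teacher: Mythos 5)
Your argument is correct and is essentially the paper's proof with the citation unpacked: the paper observes that $\mathbf u(\tau)=\mathcal U(\tau,s)\mathbf v$ is a classical solution of $\mathbf u'=\mathcal L_\infty\mathbf u+f$ with $f(\tau)=\mathbf M(\tau)\,\mathcal U(\tau,s)\mathbf v\in L^1([s,t],L^2(\mathbb R,\mathbb C^2))$ and invokes the variation-of-parameters formula (Corollary 2.2 of \cite[Ch.~4]{pazy2012semigroups}), whose proof is exactly your computation with $\mathbf w(\tau)=e^{(t-\tau)\mathcal L_\infty}\mathbf u(\tau)$, followed by the same density extension from $H^2$ to $L^2$. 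One small point: analyticity of $e^{t\mathcal L_\infty}$ is neither needed nor available at this stage (the lemma does not assume $\beta>0$); uniform boundedness of the $C_0$-semigroup on $[0,t-s]$ already suffices for the density step.
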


\begin{proof}
We refer to Engel and Nagel~\cite[App. C]{engelnagel2000} for a summary of the
theory of Lebesgue integration for
functions, $f:J\to X$, from an interval $J\subset \mathbb R$ to a Banach space, $X$. 
As in the proof of Theorem~\ref{thm:EvOp}, the
asymptotic operator, $\mathcal L_\infty$, generates a $C_0$-semigroup 
on $L^2(\mathbb R, \mathbb C^2)$. For each $\tau\in[s,t]$
and  $\mathbf v\in H^2(\mathbb R,\mathbb C^2)$, 
let 
$f(\tau) :=  \mathbf M(\tau) \,\mathcal U(\tau,s)\mathbf v \in  L^2(\mathbb R,\mathbb C^2)$. By Hypothesis~\ref{hyp:PsiEvOp}, $f\in L^1([s,t], L^2(\mathbb R,\mathbb C^2))$. 
Since $\mathcal L(t) = \mathcal L_\infty + \mathbf M(t)$, the result follows from
the variation of parameters formula (see
Corollary 2.2 of \cite[Ch. 4]{pazy2012semigroups}, together with Theorem~\ref{thm:EvOp} above).
\end{proof}

 Corollary 10.6 of~\cite[Ch. 1]{pazy2012semigroups} implies that 
 $(e^{t\mathcal L_\infty})^* = e^{t\mathcal L_\infty^*}$, where
 $\mathcal L_\infty^* = \mathbf B^T\,\partial^2_x + \mathbf M_\infty^T$ is the adjoint of $\mathcal L_\infty$.

\begin{lemma}\label{lemma:AnalSemiGp}
Suppose that $\beta > 0$ and Hypothesis~\ref{hyp:PsiEvOp} is met.
Then the semigroups $e^{t\mathcal L_\infty}$  and $e^{t\mathcal L_\infty^*}$ are analytic. 
\end{lemma}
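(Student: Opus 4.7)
The plan is to use the Fourier transform to diagonalize $\mathcal L_\infty$ (modulo the bounded zeroth-order term) and then invoke the standard sector characterization of analytic semigroup generators, together with the fact that bounded perturbations preserve analyticity.

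First, I would reduce to showing that $\mathbf{B}\partial_x^2$ alone generates an analytic semigroup on $L^2(\mathbb{R},\mathbb{C}^2)$. Since $\mathbf{M}_\infty$ is a constant matrix, the operator $\mathbf v\mapsto \mathbf{M}_\infty \mathbf v$ is bounded on $L^2$, so once $\mathbf{B}\partial_x^2$ is shown to generate an analytic semigroup, the perturbation result (Pazy~\cite[Ch.~3, Cor.~2.2]{pazy2012semigroups}) gives analyticity of the semigroup generated by $\mathcal L_\infty = \mathbf{B}\partial_x^2 + \mathbf{M}_\infty$. The same reduction applies to $\mathcal L_\infty^* = \mathbf{B}^T\partial_x^2 + \mathbf{M}_\infty^T$ since $\mathbf{B}^T$ has the same structure (entries $\beta$ on the diagonal, $\pm D/2$ off-diagonal) and in particular the same eigenvalues $\beta \pm iD/2$ as $\mathbf{B}$.

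Next, I would apply the Fourier transform $\mathcal F$ as in the proof of Lemma~\ref{lemma:HY}. Under $\mathcal F$, the operator $\mathbf{B}\partial_x^2$ becomes multiplication by the matrix symbol $-4\pi^2\xi^2 \mathbf{B}$, whose eigenvalues are $-4\pi^2\xi^2(\beta\pm iD/2)$ for $\xi\in\mathbb{R}$. Because $\beta>0$, these eigenvalues lie, as $\xi$ ranges over $\mathbb{R}$, in the closed sector
\begin{equation*}
S_\theta := \{\, z\in\mathbb{C}\setminus\{0\} : |\arg(-z)|\le \theta \,\}\cup\{0\},
\qquad \theta:=\arctan\!\bigl(|D|/(2\beta)\bigr) < \pi/2.
\end{equation*}
For any $\lambda$ in the open sector $\Sigma := \{\lambda\ne 0:|\arg\lambda|<\pi-\theta-\varepsilon\}$ with $\varepsilon>0$ small, the matrix $-4\pi^2\xi^2\mathbf{B}-\lambda$ is invertible, and a direct computation (diagonalizing $\mathbf{B}$, or using the explicit $2\times 2$ inverse as in Lemma~\ref{lemma:HY}) yields the pointwise bound
\begin{equation*}
\bigl\|(-4\pi^2\xi^2\mathbf{B}-\lambda)^{-1}\bigr\|_{\mathbb{C}^{2\times 2}} \,\le\, \frac{M_\varepsilon}{|\lambda|}
\end{equation*}
uniformly in $\xi\in\mathbb{R}$, with $M_\varepsilon$ depending only on $\varepsilon$, $\beta$, and $D$ (the geometric content is that $\lambda$ stays bounded away from the spectral sector of the symbol). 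Passing through Parseval as in Lemma~\ref{lemma:HY}, this gives the resolvent estimate
\begin{equation*}
\|R(\lambda:\mathbf{B}\partial_x^2)\|_{\mathcal B(L^2(\mathbb R,\mathbb C^2))} \,\le\, \frac{M_\varepsilon}{|\lambda|}\qquad\text{for all } \lambda\in\Sigma.
\end{equation*}

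The resolvent estimate on a sector of opening $>\pi/2$ around the positive real axis is exactly the standard characterization of the generator of a bounded analytic semigroup (see Pazy~\cite[Ch.~2, Thm.~5.2]{pazy2012semigroups}). Hence $\mathbf{B}\partial_x^2$ generates an analytic semigroup on $L^2(\mathbb{R},\mathbb{C}^2)$, and therefore so does $\mathcal L_\infty$ by the bounded perturbation result cited above. The identical argument applied to $\mathbf{B}^T$ gives analyticity of $e^{t\mathcal L_\infty^*}$. The main technical point is controlling the sector of the symbol; this is where the hypothesis $\beta>0$ is essential, because when $\beta=0$ the symbol takes purely imaginary values and the sector collapses to the imaginary axis, yielding only a $C_0$-group of isometries rather than an analytic semigroup.
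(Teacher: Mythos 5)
Your proof is correct and follows essentially the same route as the paper's: both pass to the Fourier symbol of $\mathbf B\partial_x^2$, bound the resolvent uniformly in $\xi$ by the distance from $\lambda$ to the spectral half-lines $-s(\beta\pm iD/2)$, $s\ge 0$ (which works because $\mathbf B$ is normal), invoke Pazy's Theorem~5.2 of Ch.~2, and finish with the bounded-perturbation corollary for $\mathbf M_\infty$ and the identical argument for $\mathbf B^T$. The only cosmetic difference is that you verify the sectorial estimate $\|R(\lambda)\|\le M_\varepsilon/|\lambda|$ on a sector of opening greater than $\pi/2$, whereas the paper verifies the equivalent condition $\|R(\sigma+i\tau)\|\le C/|\tau|$ on the open right half-plane.
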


\begin{proof}
We will show that for all $\sigma >0$ and $\tau\neq 0$, 
\begin{equation}
\| R(\sigma + i \tau : \mathbf B \partial_x^2) \|\,\, \leq \,\,\frac{\sqrt{1+(D/2\beta)^2}}{|\tau|}.
\label{eq:ASGResolventCond}
\end{equation}
Therefore, by Theorem~5.2 of \cite[Ch. 2]{pazy2012semigroups} (and the discussion
preceeding it), $\mathbf B \partial_x^2$
is the infinitesimal generator of an analytical semigroup. Since 
$\mathcal L_\infty =  \mathbf B \partial_x^2 + \mathbf M_\infty$, where $ \mathbf M_\infty$
 is a bounded operator, it
follows from Corollary~2.2 of \cite[Ch. 3]{pazy2012semigroups} that 
$\mathcal L_\infty$ is the infinitesimal generator of an analytical semigroup. 
The same argument holds for the adjoint. Note that as $\beta \to 0$, the
constant on the right-hand side of \eqref{eq:ASGResolventCond} 
blows up. Consequently, this proof cannot be extended to the case $\beta=0$.

To establish \eqref{eq:ASGResolventCond}, 
as in the proof of Lemma~\ref{lemma:HY}, 
and with $\lambda = \sigma + i \tau$,
we observe that
\begin{align}
\| R(\sigma + i \tau : \mathbf B \partial_x^2) \|^2
 \,\,&\leq\,\, \sup_{\xi\in \mathbb R}\mu_{\text{max}} [\mathbf C(\xi)^* \mathbf C(\xi)] \\
 \,\,&=\,\, \left( \inf_{\xi\in \mathbb R} 
  \mu_{\text{min}} [(-4\pi\xi^2\mathbf B - \lambda)
( -4\pi\xi^2\mathbf B^T - \overline{\lambda})]\right)^{-1},
\end{align}
since  the largest eigenvalue of a non-negative definite Hermitian matrix
is the inverse of the smallest eigenvalue of its inverse.
Let $a= 4\pi\xi^2\beta>0$ and $b= 2\pi\xi^2 D$. 
A calculation shows that 
$\mu_\text{min} = |a+\lambda|^2 + b^2 - 2|b\tau|$.
If we let $z=  -a +i\operatorname{sgn}(\tau b) b$, then
$\mu_\text{min} = |z-\lambda|^2$. 
The points, $z$, lie on the half-line in the left-half plane given by
$y= - \operatorname{sgn}(\tau) mx$ with $x<0$, where $m= | D/2\beta |$, 
while $\lambda$ is in the right half-plane. We observe that $z$ and $\lambda$
either  both lie above or both lie below the real axis. 
 Consequently, the problem 
of minimizing $\mu_\text{min}$  as a  function of $\xi$
is that of finding the minimum of the
square of the distance from 
$\lambda$  to this half-line, which is greater than the minimum squared 
distance to the full line. Therefore,
\begin{equation}
\| R(\sigma + i \tau : \mathbf B \partial_x^2) \| \,\,\leq \,\,\frac{\sqrt{1+m^2}}{m\sigma + |\tau|}
\,\,\leq\,\, \frac{\sqrt{1+m^2}}{|\tau|},
\end{equation}
as required.
\qed
\end{proof}

\noindent\emph{Proof of Theorem~\ref{thm:EssSpecU}}
Since the essential spectrum is invariant under compact perturbations, to prove the theorem, we just need to show that the integral in \eqref{eq:VarOfParU} is a compact
operator. 
By Engel and Nagel~\cite[Theorem C.7]{engelnagel2000}, it suffices to show that
for each 
$\tau\in(s,t)$ 
the integrand
$\mathcal K(\tau) =  e^{(t-\tau)\mathcal L_\infty} \,\mathbf M(\tau) \, \mathcal U(\tau,s)$
 is compact, and that
the function 
$\mathcal K:(s,t)\to \mathcal B(L^2(\mathbb R,\mathbb C^2))$ 
is strongly
continuous, in that for all $\mathbf v \in L^2(\mathbb R,\mathbb C^2)$,
$\|\mathcal K(\tau)\mathbf v-\mathcal K(\tau_0)\mathbf v\|_{L^2(\mathbb R,\mathbb C^2)}
\to 0$ as $\tau\to\tau_0$.

To show that $\mathcal K(\tau)$ is compact we show that
the adjoint, $\mathcal K^*(\tau)$, is compact. 
As in Theorem~\ref{thm:EssSpec}, 
$\mathcal L^*(\tau)$ is a relatively compact perturbation of  $\mathcal L_\infty^*$.
Therefore there is a $\lambda \in \rho(\mathcal L_\infty^*)$ so that
$\mathbf M^*(\tau) (\mathcal L_\infty^* - \lambda)^{-1} \in 
\mathcal B(L^2(\mathbb R,\mathbb C^2))$ is compact.
Since the semigroup $e^{t\mathcal L_\infty^*}$ is analytic, 
the operator $(\mathcal L_\infty^*-\lambda) e^{(t-\tau)\mathcal L_\infty^*}$ is bounded
(see Theorem~5.2 of \cite[Ch. 2]{pazy2012semigroups} together with the discussion
preceding that result).  Therefore
the composition, 
\begin{equation}
\mathcal K^*(\tau) =  \mathcal U^*(\tau,s) \,\mathbf M^*(\tau) 
(\mathcal L_\infty^* - \lambda)^{-1} (\mathcal L_\infty^* - \lambda) 
e^{(t-\tau)\mathcal L_\infty^*},
\end{equation}
is also compact. 

Finally, by Hypothesis~\ref{hyp:PsiEvOp}, the function 
$\mathbf M: (s,t)\to \mathcal B(L^2(\mathbb R,\mathbb C^2))$ is uniformly bounded and strongly
continuous.  Furthermore,  by Theorem~\ref{thm:EvOp}, 
$\mathcal U(\cdot,s)$ is uniformly bounded and
strongly continuous for each $s$. Therefore, $\mathcal K(\tau)$ is strongly continuous, since the composition of uniformly bounded, strongly continuous functions is  strongly continuous.
\qed

\begin{acknowledgements}
We thank the reviewer for their careful reading of the paper. 
\end{acknowledgements}

\bibliographystyle{spmpsci}      
\bibliography{../../BibFiles/PreambleSpaces,../../BibFiles/Stability,../../BibFiles/Lasers}   

\end{document}